\documentclass[11pt]{article}
\usepackage[centertags]{amsmath}
\usepackage{amsfonts}
\usepackage{amssymb}
\usepackage{amsthm}
\usepackage{newlfont}
\usepackage{graphicx}
\usepackage{latexcad}

\usepackage[center]{caption2}
 \usepackage {longtable}
 \newtheorem{thm}{Theorem}[section]
\newtheorem{cor}[thm]{Corollary}
\newtheorem{lem}[thm]{Lemma}

\theoremstyle{definition}

\allowdisplaybreaks
\usepackage{amsthm}

\usepackage{titlesec}

\titleformat{\section}
{\centering\normalfont\Large\bfseries}
{\thesection}
{1em}
{}
\titleformat{name=\section,numberless}
{\raggedright\normalfont\Large\bfseries}
{}
{0pt}
{}
\begin{document}

\title{Parallel covering a rhombus with equilateral triangles}
\author{Jingjing Wang and Yanxun Chang\footnote{Corresponding Author.}
\\{\small \texttt{21118010@bjtu.edu.cn; yxchang@bjtu.edu.cn}}\\
School of Mathematics and Statistics,\\ Beijing Jiaotong University, Beijing 100044, P.R. China.}

\date{}
 \maketitle
\begin{abstract}
Suppose that ${R}^{\alpha}$ is a rhombus with side length $1$ and with an interior angle $\alpha$, where
$0<\alpha\leq \frac{\pi}{2}$.
Let $\triangle$ be an equilateral triangle with a side parallel to a side of ${R}^{\alpha}$ and let $\{\triangle_{n}\}$ be a collection of homothetic copies of $\triangle$.
In this paper, we show the following two results:
if $0<\alpha\leq\frac{\pi}{3}$ and the sum of the areas of equilateral triangles from $\{\triangle_{n}\}$ is at least $\frac{\sqrt{3}}{4}(1+\cos\alpha+\frac{\sqrt{3}}{3}\sin\alpha)^{2}$, then these equilateral triangles can parallel cover the rhombus ${R}^{\alpha}$;
if $\frac{\pi}{3}<\alpha\leq\frac{\pi}{2}$ and the sum of the areas of equilateral triangles from $\{\triangle_{n}\}$ is at least $\frac{\sqrt{3}}{4}(1+\frac{2\sqrt{3}}{3}\sin\alpha)^{2}$, then they can parallel cover the rhombus ${R}^{\alpha}$.
Furthermore, these bounds are optimal on their respective intervals.

{\bf Keywords:} parallel covering; equilateral triangle; rhombus.

{\bf 2020 Mathematics Subject Classification:} 52C15; 05B40.
\end{abstract}

\section{Introduction}
\hspace{1.3em}
Let $P$, $C$, and $C_{n}$ $(n=1,2,\ldots)$ be convex bodies in the plane.
Let $A(P)$ denote the area of $P$.
We recall the following definitions from~\cite{Jan1}.
We say that the collection $\{C_{n}\}$ permits a \emph{covering} of $P$ if there exist rigid motions $\sigma_{n}$ such that $P \subseteq \bigcup\sigma_{n}C_{n}$.
If each $\sigma_{n}$ is a translation, then $\{C_{n}\}$ permits a \emph{translative covering} of $P$.
When $P$ and all $C_{n}$ are polygons, a covering is called \emph{parallel} if there exists a side of $P$ such that each $\sigma_{n} C_{n}$ has a side parallel to it.
Such a side of $P$ is called a \emph{base} of $P$.
Let $f(P, C)$ be the smallest positive number such that any collection of positive homothetic copies of $C$ whose total area is not less than $f(P, C)\cdot A(P)$ permits a translative covering of $P$.

It is known that $f(S, S) = 3$ for any square $S$~\cite{J.W}.
It is shown in~\cite{Jan1} that $f(T_{e}, S) = 2\sqrt{3}$ for an equilateral triangle $T_{e}$ and a square $S$.
For parallel coverings by squares, results for isosceles triangles are given in~\cite{survey1}, while those for a rhombus, an obtuse triangle and a parallelogram are given in~\cite{Su1,Su2,Su2-1}, respectively.
Results on covering a triangle with triangles can be found in~\cite{redi,Jan2}.
A result on covering a square with isosceles right triangles can be found in~\cite{Su3}.

In this paper, we consider the problem of parallel covering a rhombus with equilateral triangles.
Throughout the paper, the admissible rigid motions are translations and rotations (denoted by $\lambda_{n}$).
We still refer to such coverings as parallel coverings.
Let ${R}^{\alpha}$ be a rhombus with side length $1$ and with an interior angle $\alpha$, where $0<\alpha\leq \frac{\pi}{2}$.
Let $\triangle$ be an equilateral triangle with a side parallel to a side of ${R}^{\alpha}$ and let $\{\triangle_{n}\}$ be a collection of homothetic copies of $\triangle$.
Let $\varrho({R}^{\alpha}, \triangle)$ be the smallest positive number such that any collection of homothetic copies of $\triangle$ with total area not less than $\varrho({R}^{\alpha},\triangle)\cdot A(R^{\alpha})$ permits a parallel covering of ${R}^{\alpha}$.
The present problem differs from the related square-covering problems in that the interaction between the $60^\circ$ geometry of equilateral triangles and the angle $\alpha$ of the rhombus leads to different
residual configurations.

The aim of this paper is to determine the value of $ \varrho({R}^{\alpha}, \triangle)$.
More precisely, we prove that if $0<\alpha\leq\frac{\pi}{3}$, then
$\varrho({R}^{\alpha}, \triangle)=\frac{\sqrt{3} (\sqrt{3}+\sqrt{3}\cos\alpha+\sin\alpha)^{2}}{12\sin\alpha}$ (see Theorem~\ref{thm:1});
if $\frac{\pi}{3}<\alpha\leq\frac{\pi}{2}$, then
$\varrho({R}^{\alpha}, \triangle)=\frac{\sqrt{3} (\sqrt{3}+2\sin\alpha)^{2}}{12\sin\alpha}$ (see Theorems~\ref{thm:2}--\ref{thm:3}).

\section{Preliminaries}
\hspace{1.3em}
The line segment connecting points $A$ and $B$ is denoted by $AB$, and its length is denoted by $|AB|$.
The line through points $A$ and $B$ is denoted by $\overleftrightarrow{AB}$.
To prove our main result, we need the following result from~\cite{redi}.

\begin{lem}\label{lem:1}
Let $\triangle$ be a triangle and let $\{\triangle_{n}\}$ be any collection of positive homothetic copies of $\triangle$.
If $\sum A(\triangle_{n})\ge2\cdot A(\triangle)$, then $\{\triangle_{n}\}$ permits a parallel covering of $\triangle$.
\end{lem}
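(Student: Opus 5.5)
The plan is a greedy, row-by-row (``strip'') covering argument in the style of Moon--Moser type results. Normalize so that $\triangle$ has a horizontal base and $\triangle$ itself is the target; write $A=A(\triangle)$ and let the copies be $\triangle_1,\triangle_2,\dots$, indexed by side lengths $a_1\ge a_2\ge\cdots$, with $\sum a_n^2\ge 2$ (after scaling $\triangle$ to side $1$). If $a_1\ge1$, the single copy $\triangle_1$ covers $\triangle$ and we are done. So we may assume every $a_n<1$.

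The covering is built in horizontal layers starting from the base. Place copies consecutively with their bases on the current horizontal line, left to right, as upright triangles. The gaps between consecutive upright triangles are covered by inverted triangles of the trapezoid's layer. A cleaner way to organize this is to replace triangles by trapezoids. A layer of height $h=\frac{\sqrt3}{2}a_k$ (governed by the first, largest triangle of that layer) is a trapezoid of $\triangle$. Put triangles of sizes $a_k,a_{k+1},\dots$ with bases on the bottom line and apexes up. Since each is at least as tall as any later one in the layer, successive triangles overlapping by half a side still cover the full strip of height equal to the smallest one. Using the standard trick, the strip of height $\frac{\sqrt3}{2}a_j$ (where $a_j$ is the last in the layer) is covered once the bases sum to roughly the layer width plus $a_k$. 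The next layer starts on the line at height $\frac{\sqrt3}{2}a_j$.

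The key accounting step is to show that each completed layer of height $h_i$ uses area at most about $2\cdot(\text{area of that strip of }\triangle)$ plus a boundary term. The final incomplete stage must then be handled by comparing the total wasted area with the bound $\sum a_n^2\ge2$. Concretely, I would prove by induction: if the uncovered top part is a triangle of side $s$ and the remaining copies have total squared size $\ge 2s^2$, then they cover it. The inductive step places copies along the base of that subtriangle and removes a strip. I expect this to need a small strengthening (e.g.\ the hypothesis $\sum a_n^2\ge 2s^2$ combined with $a_n\le s$) to close.

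The main obstacle is the waste estimate within a layer. Triangles wider than the remaining width overhang the sides of $\triangle$, and heights decrease, so the strip actually covered is only as tall as the smallest triangle used. I would first try the estimate
\[
\sum_{n\in\text{layer}} a_n^2\le 2\,a_{\text{first}}\cdot(\text{base width}),
\]
which is exactly what gives the constant $2$. If that fails near the apex, I would fall back on the cited result of~\cite{redi}, which is precisely this lemma.
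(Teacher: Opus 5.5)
There is a genuine gap. The paper does not prove this lemma; it quotes it from \cite{redi}. Your fallback of citing \cite{redi} is therefore what the paper does, but a citation is not a proof, and the layer argument you offer in its place breaks at its central step. Normalize $\triangle$ affinely to be equilateral of side $1$. If two upright copies stand on a common line and their bases overlap in a segment of length $d$, their union has no gap only up to height $\frac{\sqrt3}{2}d$. So overlapping by half a side covers a strip of \emph{half} the height of the smaller copy, not its full height. To cover the full height $\frac{\sqrt3}{2}a_j$ of the smallest copy, every overlap must be at least $a_j$, so a copy of side $a_i$ advances by at most $a_i-a_j$, which is nothing when sizes are equal. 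Your step ``bases sum to the width plus $a_k$'' is the square-covering trick, and it does not transfer: positive homothets standing side by side on a line leave inverted triangular gaps, so consecutive copies must overlap. After correcting this, a copy of side $a$ in a strip of height $\frac{\sqrt3}{2}t$ (overlap $t$) covers new area $\frac{\sqrt3}{2}t(a-t)$, against its own area $\frac{\sqrt3}{4}a^2$. Its efficiency is therefore $2\frac{t}{a}(1-\frac{t}{a})\le\frac12$, with equality only when $t=\frac a2$. The factor $2$ is thus the best case of every layer, leaving no slack for unequal sizes within a layer, overshoot at its ends, or the first layer.

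That last point breaks the accounting. Grant your estimate $\sum_{\text{layer}}a_n^2\le 2a_{\mathrm{first}}w$ and telescope in Moon--Moser fashion, using that the leading copy of each layer is no larger than the last copy of the previous one. The layers after the first then cost at most about $2A(\triangle)$ in total. The first layer, led by $\triangle_1$ and bounded by nothing, costs up to $\frac{\sqrt3}{2}a_1=2a_1A(\triangle)$ on top of that. The constant $2$ is sharp: two copies of side $1-\varepsilon$ each contain at most one vertex, so they cannot cover $\triangle$. Hence no additive term of order $a_1A(\triangle)$ can be absorbed, least of all as $a_1\to1$. Even the paper's more efficient layer machinery, which may use inverted copies, has this defect. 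Corollary~\ref{cor:4} with $\alpha=\frac{\pi}{3}$ only guarantees a covering of an equilateral triangle once the total area reaches $(1+2a_1)A(\triangle)$, which exceeds $2A(\triangle)$ when $a_1>\frac12$.

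Your full-width layers also ignore what $\triangle_1$ already covers. Take $a_1=0.9$ and all other copies tiny: your later layers need $\sum_{n\ge2}a_n^2\approx2$, while the hypothesis only guarantees $1.19$. Your induction on the top subtriangle (``$\sum a_n^2\ge 2s^2$ with $a_n\le s$'') is the lemma itself rescaled, so it adds no strengthened hypothesis that could carry the missing slack. A real proof needs a separate treatment of the large copies, for instance placing them in corners of $\triangle$ and covering only the complement, with a case analysis in $a_1,a_2$ like the one in Theorems~\ref{thm:1}--\ref{thm:3}. It also needs an estimate for the remainder with no additive $a_1$ term.
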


\begin{lem}\label{lem:2}
Let $Z_{1} := ABCD$ be a trapezoid with $AB \parallel CD$, $|CD| = a > 0$, height $h > 0$, base angles $\alpha \in (0, \frac{\pi}{2}]$ and $\beta = \frac{\pi}{3}$ \textup{(see Fig.~\ref{Fig.1})}.
Then $|AB|=a+\frac{\sqrt{3}}{3}h+h\cot\alpha$.
Let $\triangle$ be an equilateral triangle with a side parallel to $AB$ and let $\{\triangle_{n}\}$ be a collection of homothetic copies of $\triangle$.
Let $a_{n}$ denote the side length of $\triangle_{n}$ for $n = 1, 2, \ldots$.
Without loss of generality we may assume that $a_{1} \geq a_{2} \geq \cdots$.
If
	\begin{align*}
	\sum A(\triangle_{n}) \geq
	\begin{cases}
	\frac{1}{2}h(2a+\frac{\sqrt{3}}{3}h+h\cot\alpha)+\frac{\sqrt{3}}{2}a_{1}(a+\frac{\sqrt{3}}{3}h+h\cot\alpha),
	&\text{if }\alpha\in(0, \frac{\pi}{3}],\\
	\frac{1}{2}h(2a+\frac{\sqrt{3}}{3}h+h\cot\alpha)+ \frac{\sqrt{3}}{2}a_{1}a +\frac{\sqrt{3}}{4}a_{1}h(\sqrt{3}+\cot\alpha),
	& \text{if } \alpha\in(\frac{\pi}{3}, \frac{\pi}{2}],
	\end{cases}
	\end{align*}
then $\{\triangle_{n}\}$ permits a parallel covering of $Z_{1}$.
\begin{figure}[htb]
		\centering
		\includegraphics[width=15cm]{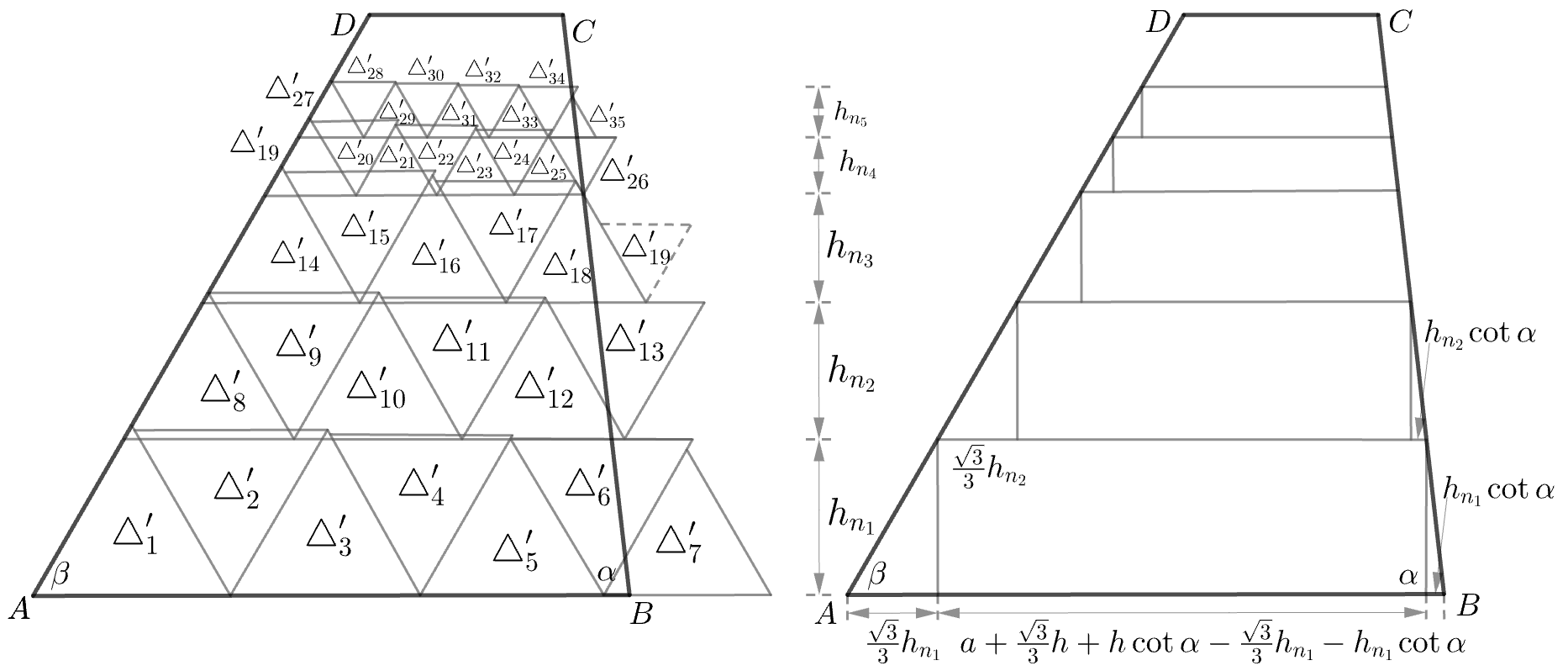}
		\caption{A covering method of $Z_{1}$}
		\label{Fig.1}
\end{figure}
\end{lem}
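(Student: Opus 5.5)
We will cover the trapezoid $Z_1$ by a strip-by-strip greedy procedure, the standard "layers" method. Place $Z_1$ with $AB$ on the horizontal axis, $A$ at the left (angle $\alpha$) and $B$ at the right (angle $\beta=\frac{\pi}{3}$). All triangles $\triangle_n$ are positioned with a horizontal side at the bottom, apex pointing up.

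The formula $|AB|=a+\frac{\sqrt3}{3}h+h\cot\alpha$ is immediate by dropping perpendiculars from $C$ and $D$. The horizontal width of $Z_1$ at height $y$ is then $w(y)=a+(h-y)(\frac{\sqrt3}{3}+\cot\alpha)$, and the area is $\frac12 h(2a+\frac{\sqrt3}{3}h+h\cot\alpha)$, which is the main term of the hypothesis.

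The construction proceeds in layers.
\begin{enumerate}
\item Start the first layer at height $y_0=0$ with $\triangle_1$ placed so that its right side lies along $BC$. This is possible since both that side and $BC$ make angle $\frac{\pi}{3}$ with the base.
\item Continue placing $\triangle_2,\triangle_3,\dots$ leftwards along the line $y=y_0$, each adjacent to the previous one. The layer has height $\frac{\sqrt3}{2}a_{k}$, where $a_k$ is the first (largest) triangle of the layer.
\item Because the triangles in a layer have decreasing sizes, the union of the layer covers the horizontal strip of $Z_1$ between $y_0$ and $y_0+\frac{\sqrt3}{2}a_{j}$ only up to the last, smallest triangle $\triangle_j$. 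To fix this, cover the strip $y_0\le y\le y_0+\frac{\sqrt3}{2}a_j$ with a trapezoid or triangle strip, using the gaps between consecutive triangles.
\item A cleaner variant of the layer argument covers, inside each layer, a parallelogram-like strip of height equal to the height of the last triangle used. Equilateral triangles placed side by side with inverted gaps then cover a strip of height $\frac{\sqrt3}{2}a_{\text{last}}$.
\item Stop a layer once the horizontal extent of the triangles exceeds the width of $Z_1$ at the bottom of the layer, namely $w(y_0)$. The last triangle may overshoot $AD$.
\item Start the next layer at $y_1=y_0+\frac{\sqrt3}{2}a_{j}$ with the next triangle, again flush with $BC$.
\item Iterate until the layers reach height $h$.
\end{enumerate}

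The area accounting bounds the waste in each layer. Each layer $i$ uses triangles whose total area is at most the strip area plus two kinds of loss.
\begin{itemize}
\item The "sawtooth" region above the covered strip is charged against the next layer's first triangle. Compare each triangle of side $a_n$ with the strip of height equal to the next triangle's height, using $a_n\ge a_{n+1}$.
\item The overshoot at the left end is bounded by roughly $\frac{\sqrt3}{4}a_{k}^2$.
\end{itemize}
Summing, the waste telescopes to at most $\frac{\sqrt3}{2}a_1$ times the total horizontal extent.

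In the case $\alpha\le\frac{\pi}{3}$, this extent is $|AB|=a+\frac{\sqrt3}{3}h+h\cot\alpha$, giving the first bound. In the case $\alpha>\frac{\pi}{3}$, the left side $AD$ is steeper than $\frac{\pi}{3}$. The leftover waste at the left then consists of a half-strip along the vertical projection, and the charge becomes $\frac{\sqrt3}{2}a_1a+\frac{\sqrt3}{4}a_1h(\sqrt3+\cot\alpha)$, which is the average of the widths rather than the maximal width. Finally, if the triangles ever run out before height $h$ is reached, the area inequality is contradicted.

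I expect the main obstacle to be this precise waste accounting, especially the second case. There one must show that, for $\alpha>\frac{\pi}{3}$, the overshoot beyond $AD$ per layer is controlled by the average width $\frac12(w(y_i)+w(y_{i+1}))$ rather than by $w(y_i)$. I would try to do this by exploiting the fact that a triangle's left slanted side, at angle $\frac{\pi}{3}<\alpha$, lies outside $AD$. A separate detail to handle is a possible single huge triangle ($\frac{\sqrt3}{2}a_1\ge h$). That case is dealt with directly, since then $\triangle_1$ together with the triangles alongside it finishes the covering in one layer.
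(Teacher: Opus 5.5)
\textbf{There is a genuine gap: your layers do not cover anything.} You fix every triangle apex-up and lay the triangles side by side along the base line of a layer. Two adjacent upright triangles meet only in a point of that line. At height $y$ above it they leave an uncovered inverted gap of width $2y/\sqrt3$, so such a layer covers no strip of positive height, contrary to your items 3--4.

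The missing idea, which the paper uses (rotations are admissible in this setting), is to let consecutive triangles point alternately up and down. Each new triangle is placed so that one of its sides lies along a side of the preceding triangle, with a common endpoint on the base line. Because $a_1\ge a_2\ge\cdots$, the triangles of a layer then:
\begin{itemize}
\item have disjoint interiors;
\item lie in a parallelogram with $60^\circ$ ends whose height is that of the \emph{first} triangle of the layer;
\item cover the strip up to the height of the last triangle.
\end{itemize}
This is what drives the accounting. The first layer costs at most $\frac{\sqrt3}{2}a_1|AB|$ plus an overhang. Layer $j+1$ costs at most its base width times $h_j$, the height of layer $j$, because its first triangle is no taller than $h_j$. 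With upright translates alone the lemma is actually false: for many equal triangles of side $\varepsilon\to0$ the hypothesis tends to $A(Z_1)$, while translates of a triangle cover with density $\frac32$. If by ``inverted gaps'' in item 4 you meant that the gaps are filled by inverted triangles, that is the right idea. But it contradicts your standing convention and has to be built into both the construction and the bookkeeping.

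\textbf{Your account of $\alpha>\frac{\pi}{3}$ is also wrong.} The second bound is not $\frac{\sqrt3}{2}a_1$ times the average width; that would be $\frac{\sqrt3}{2}a_1a+\frac{\sqrt3}{4}a_1h(\frac{\sqrt3}{3}+\cot\alpha)$. In fact it exceeds the first bound by $\frac{a_1h}{4}(1-\sqrt3\cot\alpha)>0$, so the waste grows rather than shrinks.

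In the paper, the overhang of layer $j$ at the side of angle $\alpha$ is charged as an equilateral triangle of height $h_j$, with area $\frac{\sqrt3}{3}h_j^2$. It must be tied to the layer height, not to the first triangle of the layer, so that it can be offset against the strip. A strip of $Z_1$ of height $h_j$ has area (top width)$\cdot h_j+\frac12h_j^2(\frac{\sqrt3}{3}+\cot\alpha)$. The net excess per layer is therefore $h_j^2(\frac{\sqrt3}{6}-\frac12\cot\alpha)$, which is $\le 0$ exactly when $\alpha\le\frac{\pi}{3}$. For $\alpha>\frac{\pi}{3}$ one uses $\sum_j h_j^2\le h_1\sum_j h_j<\frac{\sqrt3}{2}a_1h$, and this gives precisely the second formula.

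In addition, for $\alpha>\frac{\pi}{3}$ your stopping rule (horizontal extent exceeds $w(y_0)$) does not ensure the strip is covered next to the side of angle $\alpha$. The slanted side of an upright last triangle moves inward faster than that side does. The paper therefore sets the layer height by cases on the triangle crossing that side. Either one more triangle is added, or the height is cut down to $|\triangle'_{p_j}\cap BC|\sin\alpha$ in its notation. In both cases the next layer's first triangle stays no taller than the current layer.
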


\begin{proof}
We use the following procedure to construct a parallel covering of $Z_{1}$ with equilateral triangles, following the method of~\cite{Jan1, J.W} (see Fig.~\ref{Fig.1}).
We place the equilateral triangles layer by layer from left to right, with one side of each triangle parallel to $AB$.
In particular, the base of the first layer contains the segment $AB$.

In each layer, the first triangle is positioned so that one of its sides lies along $AD$ and another side lies along the base of the layer.
Each subsequent triangle is placed to the right of the preceding one so
that one of its sides is contained in a side of the preceding triangle,
with the two sides having a common endpoint on the base of the layer,
and the interiors of the triangles are pairwise disjoint.

A layer is called \emph{full} if there is at least one unused triangle
in the collection and the first unused triangle in the given order
cannot be placed, according to the above placement rule, to the right
of the last triangle already placed, in such a way that its interior
has a nonempty intersection with $Z_{1}$.
If the collection is finite and all its triangles have been used before
$Z_1$ is completely covered, the construction stops. We then classify
the last layer by considering a hypothetical equilateral triangle whose
side length does not exceed that of the last triangle placed.
If no such triangle can be placed in the same way, the last layer is regarded as
\emph{full}; otherwise, it is the final non-full layer.

For each $n$, let $\lambda_{n}$ be a rigid motion used to place $\triangle_{n}$, and define $\triangle'_{n} := \lambda_{n}(\triangle_{n})$.
For simplicity, each placed triangle is labeled $\triangle_n'$ in all figures.

For the $j$-th full layer, let $P_{j}$ be the unique intersection point of the base of this layer with the side $BC$ of $Z_{1}$, and let $\triangle'_{p_{j}}$ denote the first triangle in the left-to-right placement order whose boundary contains $P_{j}$.
To determine the last triangle $\triangle'_{n_{j}}$ of the $j$-th full layer and the corresponding layer height $h_{n_{j}}$, we first check whether $\triangle'_{p_{j}}$ is the first triangle in that layer.

If $\triangle'_{p_{j}}$ is not the first triangle in that layer, we consider the following three cases according to the position of the top vertex of $\triangle'_{p_{j}}$ relative to the line $\overleftrightarrow{BC}$.

\begin{enumerate}
		\item\label{case:right} \textit{
		The top vertex of $\triangle'_{p_{j}}$ lies strictly to the right of the line $\overleftrightarrow{BC}$}.
		In this case, $\triangle'_{p_{j}}$ is the last triangle of the layer.
		We set $n_{j}=p_{j}$ and $h_{n_{j}}=\frac{\sqrt{3}}{2}a_{p_{j}-1}$ (the height of $\triangle'_{p_{j}-1}$).
		For example, in Fig.~\ref{Fig.1}, $\triangle'_{7}$ has its top vertex strictly to the right of the line $\overleftrightarrow{BC}$.
		Hence, $\triangle'_{n_{1}} = \triangle'_{7}$ and $h_{n_{1}} = \frac{\sqrt{3}}{2}a_{6}$.
		\item\label{case:left} \textit{
		The top vertex of $\triangle'_{p_{j}}$ lies strictly to the left of the line $\overleftrightarrow{BC}$}.
		If $\frac{\sqrt{3}}{2}a_{p_{j}+1}>|\triangle'_{p_{j}} \cap BC|\cdot\sin\alpha$, then $\triangle'_{p_{j}+1}$ is the last triangle of the layer.
		We set $n_{j}=p_{j}+1$ and $h_{n_{j}}=\frac{\sqrt{3}}{2}a_{p_{j}+1}$.
		Otherwise, the layer ends with $\triangle'_{p_{j}}$. We set $n_{j}=p_{j}$ and
		$h_{n_{j}}=|\triangle'_{p_{j}}\cap BC|\cdot\sin\alpha$.
		For example, in the second layer of Fig.~\ref{Fig.1}, the top vertex of $\triangle'_{12}$ lies strictly to the left of the line $\overleftrightarrow{BC}$ and $\frac{\sqrt{3}}{2}a_{13}>|\triangle'_{12}\cap BC|\cdot\sin\alpha$.
		Therefore, $\triangle'_{n_{2}}=\triangle'_{13}$ and $h_{n_{2}}=\frac{\sqrt{3}}{2}a_{13}$.
		In the third layer, the opposite inequality holds for $\triangle'_{18}$, so $\triangle'_{n_{3}}=\triangle'_{18}$ and $h_{n_{3}}=|\triangle'_{18}\cap BC|\cdot\sin\alpha$.
		\item \textit{
			The top vertex of $\triangle'_{p_{j}}$ lies on the line $\overleftrightarrow{BC}$}.
		As in item~\ref{case:right}, $\triangle'_{p_{j}}$ is the last triangle of the layer. We set $n_{j}=p_{j}$ and $h_{n_{j}}=\frac{\sqrt{3}}{2}a_{p_{j}-1}$.
		For example, this occurs for $\triangle'_{35}$ in the fifth layer of Fig.~\ref{Fig.1}, and hence $\triangle'_{n_{5}} = \triangle'_{35}$ and $h_{n_{5}} = \frac{\sqrt{3}}{2}a_{34}$.
\end{enumerate}

It remains to consider the case in which $\triangle'_{p_{j}}$ is the first triangle in that layer.

If the top vertex of $\triangle'_{p_{j}}$ lies on or strictly to the right of the line $\overleftrightarrow{BC}$, then the covering is complete, since $\triangle'_{p_{j}}$ already covers the remaining uncovered part of $Z_{1}$.
If the top vertex of $\triangle'_{p_{j}}$ lies strictly to the left of the line $\overleftrightarrow{BC}$, then the covering is complete if $\triangle'_{p_{j}}$ covers the remaining uncovered part of $Z_{1}$;
otherwise, $\triangle'_{n_{j}}$ and $h_{n_{j}}$ are determined by the rule in item~\ref{case:left}.

Once the last triangle $\triangle'_{n_j}$ of the $j$-th full layer
and the corresponding height $h_{n_j}$ have been determined, if the
construction proceeds to a new layer, its base is placed at a vertical
distance $h_{n_j}$ above the base of the $j$-th full layer. The first
triangle of the new layer is $\triangle'_{n_j+1}$, with one side lying
along $AD$ and another along the base of the new layer.
This process is repeated until $Z_{1}$ is completely covered or all triangles in the collection have been used.

Contrary to the statement, we suppose that it is impossible to cover $Z_{1}$ with the equilateral triangles $\triangle_{1}, \triangle_{2}, \ldots$ by this method.

We first consider the case where finitely many full layers are created.
Let $k$ denote the number of full layers.
If $k=0$, then all triangles are placed in the first layer.
Hence,
$\sum A(\triangle_n)\leq\frac{\sqrt3}{2}a_1(a+\frac{\sqrt3}{3}h+h\cot\alpha)$.
This leads to a contradiction.
Therefore, we may assume that $k\geq1$.

By construction, the first triangle in the $(j+1)$-th layer is $\triangle_{n_{j}+1}'$.
By the definition of $h_{n_{j}}$, we have $\frac{\sqrt{3}}{2} a_{n_{j}+1} \le h_{n_{j}}$ whenever the $(j+1)$-th layer exists.
Clearly, $\sum_{j=1}^k h_{n_{j}} < h$.

The total area of the equilateral triangles lying in the first layer is less than $\frac{\sqrt{3}}{2}a_{1}(a+\frac{\sqrt{3}}{3}h+h\cot\alpha)+\frac{\sqrt{3}}{3}h_{n_{1}}^{2}$, where $\frac{\sqrt{3}}{2}a_{1}(a+\frac{\sqrt{3}}{3}h+h\cot\alpha)$ is the area of a parallelogram with base length $a+\frac{\sqrt{3}}{3}h+h\cot\alpha$ and height $\frac{\sqrt{3}}{2}a_{1}$, and $\frac{\sqrt{3}}{3}h_{n_{1}}^{2}$ is the area of an equilateral triangle with height $h_{n_{1}}$.

Similarly, using $\frac{\sqrt{3}}{2}a_{n_{j-1}+1}\leq h_{n_{j-1}}$, we know that the total area of the equilateral triangles lying in the $j$-th layer, for $j\in\{2,3,\ldots,k\}$, is less than
\begin{align*}
		(a+\frac{\sqrt{3}}{3}h+h\cot\alpha - \sum\nolimits_{i=1}^{j-1} h_{n_{i}}(\frac{\sqrt{3}}{3} + \cot\alpha))h_{n_{j-1}} +\frac{\sqrt{3}}{3}h_{n_{j}}^{2}.
\end{align*}

Moreover, the total area of the equilateral triangles lying in the $(k + 1)$-th layer, if it has been created, does not exceed
\begin{align*}
		(a+\frac{\sqrt{3}}{3}h+h\cot\alpha - \sum\nolimits_{i=1}^{k}h_{n_{i}}(\frac{\sqrt{3}}{3} + \cot\alpha))h_{n_{k}}.
\end{align*}

Observe that the sum
\begin{align*}
		\sum\nolimits_{j=1}^{k} [ (a+\frac{\sqrt{3}}{3}h+h\cot\alpha - \sum\nolimits_{i=1}^{j}h_{n_{i}} (\frac{\sqrt{3}}{3}+\cot\alpha)) h_{n_{j}} + \frac{\sqrt{3}}{6} h_{n_{j}}^2 + \frac{1}{2} h_{n_{j}}^2 \cot\alpha]
\end{align*}
does not exceed the area of a trapezoid with base lengths $a$ and $a+\frac{\sqrt{3}}{3}h+h \cot\alpha$ and with height $h$, that is, does not exceed $\frac{1}{2}h(2a+\frac{\sqrt{3}}{3}h+h\cot\alpha)$.
Hence,
\begin{align*}
		\sum A(\triangle_{n})
		&<\frac{\sqrt{3}}{2}a_{1}(a+\frac{\sqrt{3}}{3}h+h\cot\alpha)+\frac{\sqrt{3}}{3}h_{n_{1}}^{2}\\
		&~~~+\sum\nolimits_{j=2}^{k}[(a+\frac{\sqrt{3}}{3}h+h\cot\alpha - \sum\nolimits_{i=1}^{j-1} h_{n_{i}}(\frac{\sqrt{3}}{3} + \cot\alpha))h_{n_{j-1}}+ \frac{\sqrt{3}}{3}h_{n_{j}}^{2}]\\
		&~~~+(a+\frac{\sqrt{3}}{3}h+h\cot\alpha - \sum\nolimits_{i=1}^{k}h_{n_{i}}(\frac{\sqrt{3}}{3} + \cot\alpha))h_{n_{k}} \\
		&=\sum\nolimits_{j=1}^{k} [ (a+\frac{\sqrt{3}}{3}h+h\cot\alpha - \sum\nolimits_{i=1}^{j}h_{n_{i}} (\frac{\sqrt{3}}{3}+\cot\alpha)) h_{n_{j}} +\frac{\sqrt{3}}{6} h_{n_{j}}^2 \\
		&~~~+ \frac{1}{2} h_{n_{j}}^2 \cot\alpha]+\frac{\sqrt{3}}{2}a_{1}(a+\frac{\sqrt{3}}{3}h+h\cot\alpha)+\sum\nolimits_{j=1}^{k}h_{n_{j}}^{2}(\frac{\sqrt{3}}{6} - \frac{1}{2}\cot\alpha)\\
		&<\frac{1}{2}h(2a+\frac{\sqrt{3}}{3}h+h\cot\alpha)+\frac{\sqrt{3}}{2}a_{1}(a+\frac{\sqrt{3}}{3}h+h\cot\alpha)\\&~~~+\sum\nolimits_{j=1}^{k}h_{n_{j}}^{2}(\frac{\sqrt{3}}{6} - \frac{1}{2}\cot\alpha).
\end{align*}

From $\sum_{j=1}^{k}h_{n_{j}} < h$ and $\frac{\sqrt{3}}{2}a_{1}\geq h_{n_{1}} \geq h_{n_{2}} \geq \cdots \geq h_{n_{k}}$, we obtain
$\sum\nolimits_{j=1}^{k}h_{n_{j}}^{2}\leq h_{n_{1}}\sum\nolimits_{j=1}^{k}h_{n_{j}}<\frac{\sqrt{3}}{2}a_{1}h$.
Moreover, since \(\alpha\in(0,\frac{\pi}{2}]\), we have
\begin{align*}
		\frac{\sqrt{3}}{6}-\frac{1}{2}\cot\alpha
		\begin{cases}
			\leq0, & \text{if } \alpha\in(0, \frac{\pi}{3}], \\
			>0, & \text{if } \alpha\in(\frac{\pi}{3}, \frac{\pi}{2}].
		\end{cases}
\end{align*}

Thus,
\begin{align*}
	\sum A(\triangle_{n}) <
	\begin{cases}
	\frac{1}{2}h(2a+\frac{\sqrt{3}}{3}h+h\cot\alpha)+	 \frac{\sqrt{3}}{2}a_{1}(a+\frac{\sqrt{3}}{3}h+h\cot\alpha),
	& \text{if } \alpha\in(0, \frac{\pi}{3}], \\
	\frac{1}{2}h(2a+\frac{\sqrt{3}}{3}h+h\cot\alpha)+ \frac{\sqrt{3}}{2}a_{1}a +\frac{\sqrt{3}}{4}a_{1}h(\sqrt{3}+\cot\alpha), & \text{if } \alpha\in(\frac{\pi}{3}, \frac{\pi}{2}],
   \end{cases}
\end{align*}
which is a contradiction.

Now consider the case where infinitely many full layers are created.
Using the strict estimates obtained above together with
\begin{align*}
	&\sum\nolimits_{j=1}^{\infty}[(a+\frac{\sqrt{3}}{3}h+h\cot\alpha-\sum\nolimits_{i=1}^{j}h_{n_{i}}(\frac{\sqrt{3}}{3}+\cot\alpha))h_{n_{j}}
	+\frac{\sqrt{3}}{6}h_{n_{j}}^2+\frac12 h_{n_{j}}^2\cot\alpha]\\
	&~~~\le\frac{1}{2}h(2a+\frac{\sqrt{3}}{3}h+h\cot\alpha),
\end{align*}
we obtain
	\[
	\sum A(\triangle_{n})
	<
	\frac12h(2a+\frac{\sqrt{3}}{3}h+h\cot\alpha)+\frac{\sqrt{3}}{2}a_{1}(a+\frac{\sqrt{3}}{3}h+h\cot\alpha)+\sum\nolimits_{j=1}^{\infty}h_{n_{j}}^2(\frac{\sqrt{3}}{6}-\frac12\cot\alpha).
	\]
Since $\sum\nolimits_{j=1}^{\infty} h_{n_{j}} \leq h$ and $\sum\nolimits_{j=1}^{\infty} h_{n_{j}}^{2} <h_{n_{1}}\sum\nolimits_{j=1}^{\infty} h_{n_{j}} \leq \frac{\sqrt{3}}{2}a_{1}h$, using again the sign of $\frac{\sqrt{3}}{6}-\frac12\cot\alpha$, we obtain the same contradiction.
\end{proof}

Corollaries~\ref{cor:3} and~\ref{cor:5} below are direct applications
of Lemma~\ref{lem:2}, whereas Corollaries~\ref{cor:4}
and~\ref{cor:6} follow from the same layer-by-layer construction in
the degenerate case $a=0$.
In Corollaries~\ref{cor:3}--\ref{cor:6}, let $\triangle$ be an equilateral triangle with one side parallel to $AB$, and let $\{\triangle_{n}\}$ be a collection of homothetic copies of $\triangle$ with side lengths $a_{n}$.
All coverings considered below are parallel.
Without loss of generality we may assume that $a_{1}\ge a_{2}\ge\cdots$.

\begin{cor} \label{cor:3}
Let $Z_{1}:=ABCD$ be the trapezoid described in Lemma~\ref{lem:2} \textup{(see Fig.~\ref{Fig.1})}.
If
\begin{align*}
	\sum A(\triangle_{n}) \geq
	\begin{cases}
	\frac{1}{2}h(2a+\frac{\sqrt{3}}{3}h+h\cot\alpha)+	 \frac{\sqrt{3}}{2}a_{1}(a+\frac{\sqrt{3}}{3}h+h\cot\alpha),
	& \text{if } \alpha\in(0, \frac{\pi}{3}], \\
	\frac{1}{2}h(2a+\frac{\sqrt{3}}{3}h+h\cot\alpha)+ \frac{\sqrt{3}}{2}a_{1}a +\frac{\sqrt{3}}{4}a_{1}h(\sqrt{3}+\cot\alpha), & \text{if } \alpha\in(\frac{\pi}{3}, \frac{\pi}{2}],
	\end{cases}
\end{align*}
then $\{\triangle_{n}\}$ permits a parallel covering of $Z_{1}$.
Moreover, if $k$ is the smallest integer such that $\triangle_{1}$, $\triangle_{2}$, $\ldots$, $\triangle_{k}$ permit a covering of $Z_{1}$ \textup{(see Fig.~\ref{Fig.1})}, then
\begin{align*}
\sum\nolimits_{n=1}^{k} A(\triangle_{n}) <
\begin{cases}
\frac{1}{2}h(2a+\frac{\sqrt{3}}{3}h+h\cot\alpha)+\frac{\sqrt{3}}{2}a_{1}(a+\frac{\sqrt{3}}{3}h+h\cot\alpha)\\+A(\triangle_{k}),
& \text{if } \alpha\in(0, \frac{\pi}{3}],\\
\frac{1}{2}h(2a+\frac{\sqrt{3}}{3}h+h\cot\alpha)+ \frac{\sqrt{3}}{2}a_{1}a +\frac{\sqrt{3}}{4}a_{1}h(\sqrt{3}+\cot\alpha)\\
+A(\triangle_{k}),
& \text{if } \alpha\in(\frac{\pi}{3}, \frac{\pi}{2}].
\end{cases}
\end{align*}
\end{cor}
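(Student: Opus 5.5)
The first assertion of Corollary~\ref{cor:3} is exactly Lemma~\ref{lem:2}, so nothing new is needed there. For the second assertion, let $F$ denote the right-hand side of the hypothesis in the relevant range of $\alpha$; in both cases $F$ is the expression appearing in Lemma~\ref{lem:2}. The plan is to apply Lemma~\ref{lem:2} to the truncated collection $\triangle_{1},\ldots,\triangle_{k-1}$ and argue by contradiction.

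Let $k$ be minimal such that $\triangle_1,\ldots,\triangle_k$ permit a covering of $Z_{1}$, and suppose
\[
\sum\nolimits_{n=1}^{k}A(\triangle_{n})\ge F+A(\triangle_{k}).
\]
Then $\sum_{n=1}^{k-1}A(\triangle_{n})\ge F$. The finite collection $\triangle_1,\ldots,\triangle_{k-1}$ is still ordered by nonincreasing side length. If $k\ge 2$, its largest side length is still $a_1$, so $F$ is unchanged. Lemma~\ref{lem:2} then says that $\triangle_1,\ldots,\triangle_{k-1}$ permit a parallel covering of $Z_{1}$, which contradicts the minimality of $k$.

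If $k=1$, the inequality would read $A(\triangle_1)\ge F+A(\triangle_1)$. This is impossible because $F>0$, as $h>0$ and $a>0$. Hence the strict inequality $\sum_{n=1}^{k}A(\triangle_n)<F+A(\triangle_k)$ holds in every case.

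The one point to be careful about is that Lemma~\ref{lem:2} is stated for an arbitrary collection, finite or infinite, sorted by size. Its proof explicitly treats finite collections, including the rule for classifying the last layer. So applying it to the initial segment $\triangle_1,\ldots,\triangle_{k-1}$ is legitimate, and $a_1$ remains the maximal side length. There is also a subtlety about what ``permit a covering'' means in the definition of $k$. If it means a covering by the layer method, the argument is the same, because Lemma~\ref{lem:2} is proved for that method. If it means any parallel covering, the conclusion from Lemma~\ref{lem:2} is still a parallel covering, so the contradiction still goes through. I expect no real obstacle beyond checking these points.
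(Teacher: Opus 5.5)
Your proposal is correct and is essentially the paper's own argument. The paper gives no written proof beyond calling Corollary~\ref{cor:3} a direct application of Lemma~\ref{lem:2}. The bound is obtained, as you do, by applying Lemma~\ref{lem:2} to the initial segment $\triangle_1,\ldots,\triangle_{k-1}$, whose largest side length is still $a_1$, and invoking the minimality of $k$. The degenerate case $k=1$ is disposed of by $F>0$.
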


\begin{cor} \label{cor:4}
Let $T_{1}:=ABC$ be a triangle with base $AB$, height $h>0$ (from $C$ to $AB$), and base angles $\alpha\in(0, \frac{\pi}{2}]$ and $\beta=\frac{\pi}{3}$ \textup{(see Fig.~\ref{Fig.2})}.
If
\begin{align*}
	\sum A(\triangle_{n}) \ge
	\begin{cases}
	\frac{1}{2}h^{2}(\frac{\sqrt{3}}{3}+\cot\alpha)+\frac{\sqrt{3}}{2}a_{1}h(\frac{\sqrt{3}}{3}+\cot\alpha),
	& \text{if } \alpha\in(0, \frac{\pi}{3}], \\
	\frac{1}{2}h^{2}(\frac{\sqrt{3}}{3}+\cot\alpha)+\frac{\sqrt{3}}{4}a_{1}h(\sqrt{3}+\cot\alpha),
	& \text{if } \alpha\in(\frac{\pi}{3}, \frac{\pi}{2}],
	\end{cases}
\end{align*}
then $\{\triangle_{n}\}$ permits a parallel covering of $T_{1}$.
Moreover, if $k$ is the smallest integer such that $\triangle_{1}$, $\triangle_{2}$, $\ldots$, $\triangle_{k}$ permit a covering of $T_{1}$, then
\begin{align*}
	\sum\nolimits_{n=1}^{k} A(\triangle_{n}) <
	\begin{cases}
	\frac{1}{2}h^{2}(\frac{\sqrt{3}}{3}+\cot\alpha)+\frac{\sqrt{3}}{2}a_{1}h(\frac{\sqrt{3}}{3}+\cot\alpha)+ A(\triangle_{k}),
	& \text{if } \alpha\in(0, \frac{\pi}{3}], \\
	\frac{1}{2}h^{2}(\frac{\sqrt{3}}{3}+\cot\alpha)+\frac{\sqrt{3}}{4}a_{1}h(\sqrt{3}+\cot\alpha)+ A(\triangle_{k}),
	& \text{if } \alpha\in(\frac{\pi}{3}, \frac{\pi}{2}].
	\end{cases}
\end{align*}
\begin{figure}[htb]
		\centering
		\includegraphics[width=1.8cm]{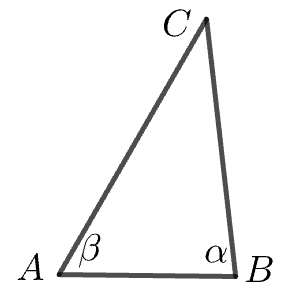}
		\caption{$T_{1}:= ABC$}
		\label{Fig.2}
\end{figure}
\end{cor}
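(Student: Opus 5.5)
Corollary~\ref{cor:4} is the degenerate case $a=0$ of Lemma~\ref{lem:2}, so the plan is to rerun the layer-by-layer construction from the proof of Lemma~\ref{lem:2} on $T_{1}$ rather than cite the lemma directly. The trapezoid $Z_{1}$ with $a=0$ collapses to $T_{1}$: the side $CD$ becomes the apex $C$, and $|AB|=\frac{\sqrt{3}}{3}h+h\cot\alpha$. Substituting $a=0$ into the two thresholds of Lemma~\ref{lem:2} gives exactly the two expressions in the statement.

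\textbf{Covering part.} We place the triangles in layers from left to right. The first layer sits on $AB$, each layer starts against $AC$ (the side making angle $\frac{\pi}{3}$ with $AB$), and the rule defining $n_{j}$ and $h_{n_{j}}$ is the one given for $Z_{1}$, with $BC$ as the right side. Suppose the covering fails. We then repeat the area estimates verbatim with $a=0$:
\begin{itemize}
\item the first layer has area less than $\frac{\sqrt{3}}{2}a_{1}(\frac{\sqrt{3}}{3}h+h\cot\alpha)+\frac{\sqrt{3}}{3}h_{n_{1}}^{2}$;
\item the $j$-th layer has area less than $(h-\sum_{i<j}h_{n_{i}})(\frac{\sqrt{3}}{3}+\cot\alpha)h_{n_{j-1}}+\frac{\sqrt{3}}{3}h_{n_{j}}^{2}$;
\item the telescoped sum is bounded by the area $\frac{1}{2}h^{2}(\frac{\sqrt{3}}{3}+\cot\alpha)$ of $T_{1}$, since the strips of height $h_{n_{j}}$ with their triangular end pieces are disjoint subsets of $T_{1}$;
\item the error term is $\sum h_{n_{j}}^{2}(\frac{\sqrt{3}}{6}-\frac12\cot\alpha)$, controlled by $\sum h_{n_{j}}^{2}<\frac{\sqrt{3}}{2}a_{1}h$ and the sign of $\frac{\sqrt{3}}{6}-\frac12\cot\alpha$.
\end{itemize}
In the case $\alpha\in(\frac{\pi}{3},\frac{\pi}{2}]$, $\frac{\sqrt{3}}{2}a_{1}\cdot\frac{\sqrt{3}}{3}h+\frac{\sqrt{3}}{2}a_{1}h(\frac{\sqrt{3}}{6}-\frac12\cot\alpha)$ must be combined with the first-layer term $\frac{\sqrt{3}}{2}a_{1}h\cot\alpha$. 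This simplifies to $\frac{\sqrt{3}}{4}a_{1}h(\sqrt{3}+\cot\alpha)$, as required. The infinite-layer case is handled by the same limiting argument as in Lemma~\ref{lem:2}.

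\textbf{The ``moreover'' part.} Let $k$ be minimal such that $\triangle_{1},\dots,\triangle_{k}$ cover $T_{1}$. By minimality, the construction applied to $\triangle_{1},\dots,\triangle_{k-1}$ fails to cover $T_{1}$, because the construction with the first $k-1$ triangles is an initial segment of the one with $k$. The contradiction argument above, applied to the finite collection $\triangle_{1},\dots,\triangle_{k-1}$ (still with largest side $a_{1}$), therefore gives $\sum_{n=1}^{k-1}A(\triangle_{n})<$ the threshold. Adding $A(\triangle_{k})$ gives the second inequality.

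\textbf{Main obstacle.} The main difficulty is checking that the degenerate geometry causes no trouble near the apex $C$. The layers shrink to zero length, and the "top vertex relative to $\overleftrightarrow{BC}$" case analysis must still yield $\frac{\sqrt{3}}{2}a_{n_{j}+1}\le h_{n_{j}}$ and $\sum h_{n_{j}}<h$. A layer may also start with a triangle that already covers the remaining small triangle, which ends the construction. I would verify these points by noting that every layer in $T_{1}$ is a trapezoid of the same shape as in Lemma~\ref{lem:2}, with the top side of length $\ge 0$. Each case in the placement rule used only the side $BC$ and the base line, never $|CD|>0$, so the case distinctions carry over unchanged.

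Justifying the minimality step requires one extra check. The argument only needs that the construction restricted to $\triangle_{1},\dots,\triangle_{k-1}$ does not cover $T_{1}$. This holds because if those triangles covered $T_{1}$ by the construction, they would in particular permit a covering, contradicting the minimality of $k$.
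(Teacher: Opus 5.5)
Your proposal is correct and takes essentially the paper's route. The paper justifies Corollary~\ref{cor:4} only by noting that it follows from rerunning the layer-by-layer construction of Lemma~\ref{lem:2} in the degenerate case $a=0$, which is what you do (including the correct simplification to $\frac{\sqrt{3}}{4}a_{1}h(\sqrt{3}+\cot\alpha)$), and your derivation of the ``moreover'' bound by applying the covering statement to $\triangle_{1},\dots,\triangle_{k-1}$ is the standard argument the paper leaves implicit.
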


\begin{cor} \label{cor:5}
Let $Z_{2} := ABCD$ be a trapezoid with $AB \parallel CD$, $|CD| = a > 0$, height $h > 0$, base angles $\alpha = \frac{\pi}{2}$ and $\beta = \frac{\pi}{3}$ \textup{(see Fig.~\ref{Fig.3})}.
Then $|AB|=a+\frac{\sqrt{3}}{3}h$.
If $\sum A(\triangle_{n}) \geq \frac{1}{2}h(2a+\frac{\sqrt{3}}{3}h)+ \frac{\sqrt{3}}{2}a_{1}(a+\frac{\sqrt{3}}{2}h)$, then $\{\triangle_{n}\}$ permits a parallel covering of $Z_{2}$.
Moreover, if $k$ is the smallest integer such that $\triangle_{1}, \triangle_{2}, \ldots, \triangle_{k}$ permit a covering of $Z_{2}$, then
$\sum\nolimits_{n=1}^{k} A(\triangle_{n}) <\frac{1}{2}h(2a+\frac{\sqrt{3}}{3}h)+
\frac{\sqrt{3}}{2}a_{1}(a+\frac{\sqrt{3}}{2}h)+A(\triangle_{k})$.
\begin{figure}[htb]
		\centering
		\includegraphics[width=2.6cm]{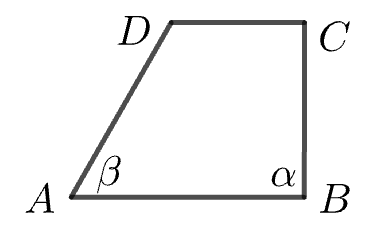}
		\caption{$Z_{2} := ABCD$}
		\label{Fig.3}
	\end{figure}
\end{cor}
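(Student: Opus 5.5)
Corollary~\ref{cor:5} is the special case $\alpha=\frac{\pi}{2}$ of Lemma~\ref{lem:2} and Corollary~\ref{cor:3}, so I will derive it by substituting into their bounds rather than redoing the construction. Since $\frac{\pi}{2}\in(\frac{\pi}{3},\frac{\pi}{2}]$, the second branch applies, and $\cot\frac{\pi}{2}=0$. The trapezoid $Z_{2}$ is then exactly $Z_{1}$ with $\alpha=\frac{\pi}{2}$, and the formula $|AB|=a+\frac{\sqrt{3}}{3}h+h\cot\alpha$ becomes $|AB|=a+\frac{\sqrt{3}}{3}h$.

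Next I substitute into the second-branch threshold. The first summand becomes $\frac{1}{2}h(2a+\frac{\sqrt{3}}{3}h)$. The remaining part is
\[
\frac{\sqrt{3}}{2}a_{1}a+\frac{\sqrt{3}}{4}a_{1}h\sqrt{3}
=\frac{\sqrt{3}}{2}a_{1}a+\frac{3}{4}a_{1}h
=\frac{\sqrt{3}}{2}a_{1}\Bigl(a+\frac{\sqrt{3}}{2}h\Bigr).
\]
This is exactly the hypothesis in Corollary~\ref{cor:5}, so Lemma~\ref{lem:2} gives the parallel covering of $Z_{2}$.

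For the \emph{moreover} part, I take the smallest $k$ for which $\triangle_{1},\ldots,\triangle_{k}$ cover $Z_{2}$ by the layer construction. The first $k-1$ triangles do not cover, so by the contrapositive of the lemma (in its quantitative form inside the proof) $\sum_{n=1}^{k-1}A(\triangle_{n})$ is strictly below the threshold. Adding $A(\triangle_{k})$ yields the stated strict bound; this is the same deduction as in Corollary~\ref{cor:3}, specialized as above.

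The only point needing care is that the lemma's proof is valid at the endpoint $\alpha=\frac{\pi}{2}$. There the sign of $\frac{\sqrt{3}}{6}-\frac{1}{2}\cot\alpha$ is positive, equal to $\frac{\sqrt{3}}{6}$. The estimate $\sum_j h_{n_j}^2<\frac{\sqrt{3}}{2}a_{1}h$ then gives the extra term $\frac{\sqrt{3}}{6}\cdot\frac{\sqrt{3}}{2}a_{1}h=\frac{1}{4}a_{1}h$, which together with $\frac{\sqrt{3}}{2}a_{1}\cdot\frac{\sqrt{3}}{3}h=\frac{1}{2}a_{1}h$ gives $\frac{3}{4}a_{1}h$, matching the formula. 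In the geometry, $BC$ is perpendicular to the bases, so the rule in item~\ref{case:left} uses $\sin\alpha=1$, which causes no degeneracy.
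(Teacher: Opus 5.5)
Your proposal is correct and is essentially the paper's own argument. The paper states that Corollary~\ref{cor:5} is a direct application of Lemma~\ref{lem:2}, and your substitution of $\alpha=\frac{\pi}{2}$ into the second branch (using $\cot\frac{\pi}{2}=0$, so that $\frac{\sqrt{3}}{2}a_{1}a+\frac{3}{4}a_{1}h=\frac{\sqrt{3}}{2}a_{1}(a+\frac{\sqrt{3}}{2}h)$) is exactly that. For the \emph{moreover} part, applying the contrapositive to $\triangle_{1},\ldots,\triangle_{k-1}$, whose largest side is still $a_{1}$ (the case $k=1$ is trivial), and then adding $A(\triangle_{k})$ gives the stated bound, as you say.
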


\begin{cor} \label{cor:6}
Let $T_{2}:=ABC$ be a triangle with base $AB$, height $h>0$ (from $C$ to $AB$), and base angles $\alpha=\frac{\pi}{2}$ and $\beta=\frac{\pi}{3}$ \textup{(see Fig.~\ref{Fig.4})}.
If $\sum A(\triangle_{n}) \geq\frac{\sqrt{3}}{6}h^{2}+ \frac{3}{4}a_{1}h$, then $\{\triangle_{n}\}$ permits a parallel covering of $T_{2}$.
\begin{figure}[htb]
		\centering
		\includegraphics[width=2.1cm]{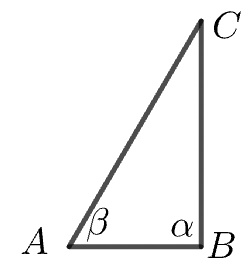}
		\caption{$T_{2}:= ABC$}
		\label{Fig.4}
	\end{figure}
\end{cor}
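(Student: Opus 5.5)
We prove Corollary~\ref{cor:6} by running the layer-by-layer construction from the proof of Lemma~\ref{lem:2} in the degenerate case $a=0$ and $\alpha=\frac{\pi}{2}$. The triangle $T_{2}$ is the limit of the trapezoid $Z_{2}$ of Corollary~\ref{cor:5} as $|CD|\to0$: the vertical side $AD$ collapses to the leg $AC$, and the side $BC$ has base angle $\frac{\pi}{3}$. So we place the triangles in decreasing order $a_1\ge a_2\ge\cdots$, layer by layer from the bottom. The base of the first layer contains $AB$, with $|AB|=\frac{\sqrt3}{3}h$. The first triangle of each layer has one side along the vertical leg and one side on the base of the layer. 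Full layers, the last triangles $\triangle'_{n_j}$ and the heights $h_{n_j}$ are defined exactly as before, using the line through the slanted side in place of $\overleftrightarrow{BC}$.

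Suppose, for contradiction, that the procedure fails to cover $T_2$. If there are no full layers, every triangle lies in the first layer, so the total area is at most $\frac{\sqrt3}{2}a_1\cdot\frac{\sqrt3}{3}h=\frac12 a_1h\le \frac34 a_1 h$, a contradiction. Otherwise, with $k$ full layers (possibly infinitely many), $\sum_j h_{n_j}<h$ and $\frac{\sqrt3}{2}a_{n_j+1}\le h_{n_j}$. The layer-area estimates of Lemma~\ref{lem:2} apply with $a=0$ and $\cot\alpha=0$:
\begin{itemize}
\item the first layer contributes less than $\frac{\sqrt3}{2}a_1\cdot\frac{\sqrt3}{3}h+\frac{\sqrt3}{3}h_{n_1}^2$;
\item the $j$-th layer contributes less than $\bigl(\frac{\sqrt3}{3}h-\frac{\sqrt3}{3}\sum_{i<j}h_{n_i}\bigr)h_{n_{j-1}}+\frac{\sqrt3}{3}h_{n_j}^2$;
\item the last partial layer contributes at most $\bigl(\frac{\sqrt3}{3}h-\frac{\sqrt3}{3}\sum_{i\le k}h_{n_i}\bigr)h_{n_k}$.
\end{itemize}
Telescoping as in the lemma gives
\[
\sum A(\triangle_n)<\tfrac{\sqrt3}{6}h^2+\tfrac12 a_1 h+\tfrac{\sqrt3}{6}\sum_j h_{n_j}^2 .
\]
Here $\sum_j\bigl[(\frac{\sqrt3}{3}h-\frac{\sqrt3}{3}\sum_{i\le j}h_{n_i})h_{n_j}+\frac{\sqrt3}{6}h_{n_j}^2\bigr]$ is bounded by the area $\frac{\sqrt3}{6}h^2$ of $T_2$, because these are the areas of the strips cut from $T_2$.

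Now use $\sum_j h_{n_j}^2\le h_{n_1}\sum_j h_{n_j}<\frac{\sqrt3}{2}a_1h$. This gives $\frac{\sqrt3}{6}\sum_j h_{n_j}^2<\frac14 a_1h$, hence
\[
\sum A(\triangle_n)<\tfrac{\sqrt3}{6}h^2+\tfrac34 a_1h,
\]
contradicting the hypothesis. This is exactly the $\alpha\in(\frac{\pi}{3},\frac{\pi}{2}]$ branch of Corollary~\ref{cor:4} at $\alpha=\frac{\pi}{2}$: $\frac12h^2\cdot\frac{\sqrt3}{3}+\frac{\sqrt3}{4}a_1h\sqrt3$. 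So one could alternatively just cite that case. I would present the argument as this specialization, noting that $\cot\frac{\pi}{2}=0$ and that $\frac{\sqrt3}{6}-\frac12\cot\alpha>0$ there.

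The main obstacle is checking that the degenerate geometry (top width $a=0$) does not break the layer rules. Near the apex $C$ the layers become narrow, and one must confirm two things. First, the first triangle of a layer, placed against the vertical leg, either completes the covering or ends the layer by the case analysis with $\triangle'_{p_j}$ being the first triangle. Second, the strip-area bound still holds when the top layer is a triangle rather than a trapezoid. For the second point, each strip is a trapezoid of width at least $0$, so the same area identity holds. For the first, the rule "covering complete if the top vertex is on or right of the slanted line" applies verbatim.
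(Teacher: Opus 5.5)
Your route is the paper's. The paper gives no separate proof and only says that Corollary~\ref{cor:6} follows from the layer-by-layer construction of Lemma~\ref{lem:2} in the degenerate case $a=0$. Your specialization reproduces the bound $\frac{\sqrt3}{6}h^2+\frac34a_1h$ correctly: $a=0$, $\cot\alpha=0$, the strip-area telescoping, and $\frac{\sqrt3}{6}\sum_j h_{n_j}^2<\frac14a_1h$. Your remark that this is the $\alpha=\frac{\pi}{2}$ instance of the second branch of Corollary~\ref{cor:4} is also right, so simply citing that corollary would suffice.

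The one thing you must fix is the orientation of the construction. In Lemma~\ref{lem:2} the first triangle of every layer is placed against $AD$, so $AD$ (which becomes $AC$ when $a=0$) is the side with base angle $\beta=\frac{\pi}{3}$. The side with base angle $\alpha$ is $BC$; that is why $h_{n_j}=|\triangle'_{p_j}\cap BC|\sin\alpha$ and why the stopping rules refer to $\overleftrightarrow{BC}$. For $\alpha=\frac{\pi}{2}$ the vertical leg is therefore $BC$, not $AD$/$AC$.

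As written, your step ``the first triangle of each layer has one side along the vertical leg'' cannot be carried out at all. An equilateral triangle with a side parallel to $AB$ only has sides in the directions $0,\frac{\pi}{3},\frac{2\pi}{3}$. Likewise, running the case analysis against the slanted line would make the relevant angle $\frac{\pi}{3}$, which is inconsistent with the $\cot\alpha=0$ you use in the bookkeeping.

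Swap the roles. Start each layer against the $\frac{\pi}{3}$-leg $AC$, and apply the rules for $\triangle'_{p_j}$ against the vertical leg $BC$, where $h_{n_j}=|\triangle'_{p_j}\cap BC|$ since $\sin\alpha=1$. Your area estimates depend only on the width $\frac{\sqrt3}{3}(h-y)$ at height $y$, so they go through unchanged after this correction.
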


\section{Main Result}

\hspace{1.3em}
Suppose that ${R}^{\alpha}$ is a rhombus with side length $1$ and with an interior angle $\alpha$, where
$0<\alpha\leq \frac{\pi}{2}$.
Let $\triangle$ be an equilateral triangle with a side parallel to the base of ${R}^{\alpha}$.
Let $\{\triangle_{n}\}$ be a collection of homothetic copies of $\triangle$.
Let $a_{n}$ denote the side length of $\triangle_{n}$ for $n = 1, 2, \ldots$.
Without loss of generality we may assume that $a_{1} \geq a_{2} \geq \cdots$.

\begin{thm}\label{thm:1}
If $0 < \alpha \leq \frac{\pi}{3}$ and
$\sum A( \triangle_{n})\geq\frac{\sqrt{3}}{4}(1+\cos\alpha+\frac{\sqrt{3}}{3}\sin\alpha)^{2}$, then $\{\triangle_{n}\}$ permits a parallel covering of $R^{\alpha}$ and thus
$\varrho({R}^{\alpha}, \triangle)=\frac{\sqrt{3}(\sqrt{3}+\sqrt{3}\cos\alpha+\sin\alpha)^{2}}{12\sin\alpha}$.
\end{thm}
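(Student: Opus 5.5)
The plan has two parts. Optimality comes from a single triangle that barely fails to cover. Sufficiency is proved by a case split on $a_1$ that reduces to Lemma~\ref{lem:2} and Corollaries~\ref{cor:3}--\ref{cor:4}.

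\textbf{Setup and optimality.} Put $A=(0,0)$, $B=(1,0)$, $D=(\cos\alpha,\sin\alpha)$, $C=(1+\cos\alpha,\sin\alpha)$, and write $s:=1+\cos\alpha+\frac{\sqrt{3}}{3}\sin\alpha$. Let $B'$ be the point of $\overleftrightarrow{AB}$ where the line through $C$ making angle $\frac{\pi}{3}$ with $AB$ meets it. The triangle $T:=AB'E$ is equilateral with side $s$, and it contains $R^\alpha$ because $\alpha\le\frac{\pi}{3}$ makes $AD$ lie inside the angle at $A$.

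I would show that $T$ is the smallest equilateral triangle in the parallel position (or its rotation by $\pi$) that contains $R^\alpha$. The reason is that such a triangle must have its base line through the lowest vertex $A$ and its two slanted sides through $A$ and $C$, respectively. Hence a single triangle of area $\frac{\sqrt{3}}{4}s^2-\varepsilon$ cannot cover $R^\alpha$. This gives $\varrho(R^\alpha,\triangle)\ge \frac{\sqrt{3}}{4}s^2/\sin\alpha$, which is exactly the stated value. So it remains to prove sufficiency. If $a_1\ge s$, the copy $\triangle_1$ placed as $T$ already covers $R^\alpha$, so from now on assume $a_1<s$.

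\textbf{Reduction to a trapezoid.} The quadrilateral $Z:=AB'CD$ is a trapezoid of the type in Lemma~\ref{lem:2}. It has $|CD|=1$, height $h=\sin\alpha$, base angles $\alpha$ at $A$ and $\frac{\pi}{3}$ at $B'$, and $|AB'|=s$, and it contains $R^\alpha$. Corollary~\ref{cor:3} with $a=1$, $h=\sin\alpha$ covers $Z$ whenever
\[
\sum A(\triangle_n)\ \ge\ \tfrac12\sin\alpha\,(2+\tfrac{\sqrt3}{3}\sin\alpha+\cos\alpha)+\tfrac{\sqrt3}{2}a_1 s .
\]
This is at most $\frac{\sqrt3}{4}s^2$ precisely when $a_1\le a^*$, for an explicit threshold $a^*<s$. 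So the case $a_1\le a^*$ is settled directly.

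\textbf{The intermediate case $a^*<a_1<s$.} Place $\triangle_1$ with one vertex at $B'$ and one side along $B'C$, so that it occupies the right-hand corner of $Z$ up to height $h_1=\frac{\sqrt3}{2}a_1$. The part of $Z$ left uncovered splits into two regions.
\begin{itemize}
\item The first, $Z'$, is the part above height $h_1$. It is again a trapezoid with base angles $\alpha$ and $\frac{\pi}{3}$, top $CD$ of length $1$, and height $\sin\alpha-h_1$.
\item The second, $Z''$, lies below height $h_1$, between $AD$ and the left side of $\triangle_1$. Since $\cot\alpha\ge\frac{\sqrt3}{3}$, it is a trapezoid with base angles $\alpha$ and $\frac{\pi}{3}$, bottom width $s-a_1$ and height $h_1$; it degenerates to a triangle of the type in Corollary~\ref{cor:4} if its top width vanishes.
\end{itemize}
I would first cover $Z'$ with $\triangle_2,\triangle_3,\dots$ using Corollary~\ref{cor:3}, with $a_2$ in place of $a_1$. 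The ``moreover'' part of that corollary bounds the total area consumed by the threshold plus the area of the last triangle used, and that last triangle is at most $\frac{\sqrt3}{4}a_2^2$. The leftover triangles then cover $Z''$ by Corollary~\ref{cor:3} or Corollary~\ref{cor:4}. A safe fallback is to treat $Z''$ as contained in an equilateral triangle of side $s-a_1$ and use Lemma~\ref{lem:1}.

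\textbf{Main obstacle.} The hard step is verifying the resulting inequality: the area of $\triangle_1$, plus the two thresholds, plus the waste term, must be at most $\frac{\sqrt3}{4}s^2$ uniformly for $a^*<a_1<s$ and $0<\alpha\le\frac{\pi}{3}$. The thresholds are quadratic in $a_1$ (using $a_2\le a_1$), so I would reduce the check to the endpoints $a_1=a^*$ and $a_1=s$, where the total should reduce to $\frac{\sqrt3}{4}s^2$, and then use concavity or monotonicity in $a_1$. If this fails near $a_1=a^*$, I would cover $Z'\cup Z''$ jointly by continuing the layer procedure of Lemma~\ref{lem:2} with $\triangle_1$ as the first triangle of the first layer. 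That loses only the first-layer slack $\frac{\sqrt3}{2}(a_1-a_2)(s-a_1)$ instead of paying the full first-layer term $\frac{\sqrt3}{2}a_1 s$.
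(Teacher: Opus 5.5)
\textbf{There is a genuine gap in the intermediate case.} Your lower bound and your case $a_1\le a^*$ (Corollary~\ref{cor:3} applied to $Z=AB'CD$) are fine, but the case $a^*<a_1<s$ does not go through.

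First, $Z''$ is misdescribed. The left side of an upright $\triangle_1'$ with right vertex $B'$ rises to the \emph{right}. So $Z''$ has angles $\alpha$ at $A$ and $\frac{2\pi}{3}$ at $(s-a_1,0)$, and its width at height $y$ is $s-a_1-y(\cot\alpha-\frac{\sqrt3}{3})$. It is therefore not a region of the type in Lemma~\ref{lem:2} or Corollaries~\ref{cor:3}--\ref{cor:4}. Your fallback also fails: no equilateral triangle of side $s-a_1$ with a side parallel to $AB$ contains $Z''$. The smallest such triangle has side $s-a_1+\frac{2\sqrt3}{3}h''$, where $h''$ is the height of $Z''$.

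Second, and decisively, the bookkeeping cannot close. Take $\alpha=\frac{\pi}{3}$ and all $a_n=a_1<1$. Then $s=2$, the budget is $\sqrt3$, $A(Z)=\frac{3\sqrt3}{4}$, $a^*=\frac14$, and $Z'$ has bottom width $2-a_1$. The area of $\triangle_1$, plus the Corollary~\ref{cor:3} threshold for $Z'$, plus $A(Z'')$ equals
\[
\tfrac{3\sqrt3}{4}+\tfrac{\sqrt3}{2}a_1(2-a_1),
\]
which exceeds $\sqrt3$ for every $a_1\in(1-\frac{\sqrt2}{2},1)$. This holds in either covering order, and before any last-triangle waste or any surcharge for $Z''$ is counted. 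Even covering only $Z''\cap R^\alpha$ (area $\frac{\sqrt3}{2}a_1$) fails for $a_1>\frac12$. So the endpoint/concavity check you plan cannot succeed. The ``joint layer'' fallback is not an argument either: the analysis in Lemma~\ref{lem:2} charges the full first-layer strip $\frac{\sqrt3}{2}a_1|AB'|$ whichever triangle starts the layer, and that is exactly the term that exceeds the budget once $a_1>a^*$.

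\textbf{The missing idea} is to put $\triangle_1$ in the corner of your optimal triangle $T$ at $A$, with sides on $\overleftrightarrow{AB}$ and $\overleftrightarrow{AE}$, rather than at $B'$. Because $R^\alpha$ lies to the right of $\overleftrightarrow{AE}$, the set $R^\alpha\setminus\triangle_1'$ lies to the right of the line through the right side of $\triangle_1'$, to the left of $\overleftrightarrow{BC}$, and below $\overleftrightarrow{DC}$. That region is a single triangle of the type in Corollary~\ref{cor:4}, rotated by $\pi$:
its base is the segment of $\overleftrightarrow{DC}$ of length $s-a_1$ ending at $C$; its base angles are $\alpha$ at $C$ and $\frac{\pi}{3}$; and its height is $(s-a_1)/(\frac{\sqrt3}{3}+\cot\alpha)$.

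Corollary~\ref{cor:4} together with $a_2\le a_1$ then gives
\[
\sum A(\triangle_n)<\tfrac{\sqrt3}{4}a_1^2+\frac{(s-a_1)^2}{2(\frac{\sqrt3}{3}+\cot\alpha)}+\tfrac{\sqrt3}{2}a_1(s-a_1)\le\tfrac{\sqrt3}{4}s^2,
\]
where the last step is equivalent to $\cot\alpha\ge\frac{\sqrt3}{3}$, i.e.\ to $\alpha\le\frac{\pi}{3}$. Up to the central symmetry of $R^\alpha$, this is the paper's proof. It needs no case split on $a_1$. It also has no slack at all at $\alpha=\frac{\pi}{3}$ with $a_2=a_1$, so there is no room for the extra surcharges your two-region split incurs.
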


\begin{proof}
We claim that $\{\triangle_{n}\}$ permits a parallel covering of $R^{\alpha}$.
We may assume that $a_{1} < 1 + \cos\alpha+\frac{\sqrt{3}}{3}\sin\alpha$, otherwise $R^{\alpha}$ can be parallel covered by $\triangle_{1}$.
\begin{figure}[!htbp]
	\centering
	\includegraphics[width=10cm]{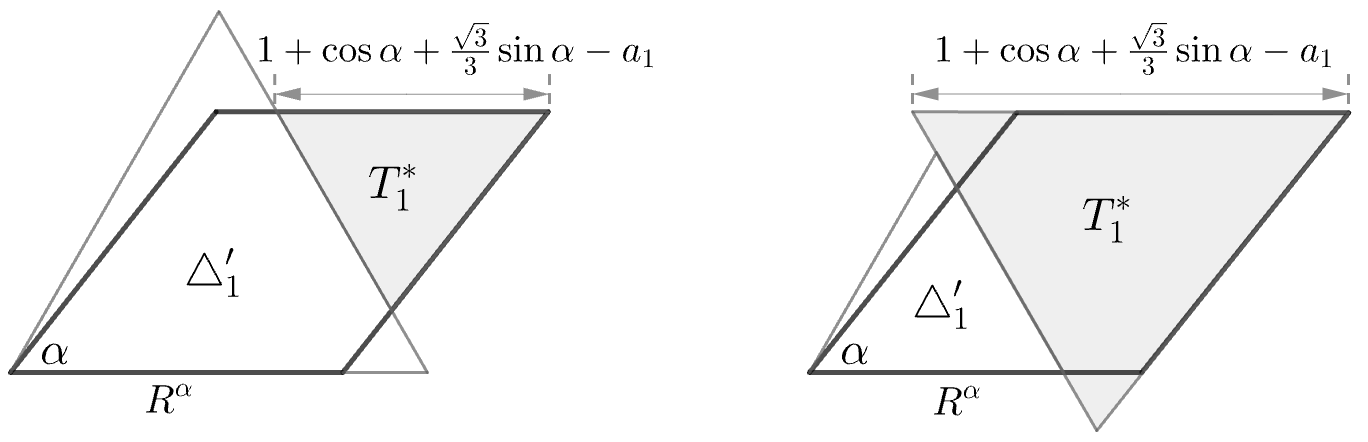}
	\caption{$0<a_{1}<1 + \cos\alpha+\frac{\sqrt{3}}{3}\sin\alpha$}
	\label{Fig.5}
\end{figure}

We place $\triangle_{1}$ as in Fig.~\ref{Fig.5}.
The remaining equilateral triangles are used for the covering of the triangle $T_{1}^{\ast}\supseteq R^{\alpha}\setminus \triangle'_{1}$ with base length $1+\cos\alpha+\frac{\sqrt{3}}{3}\sin\alpha -a_{1}$ and height
$\frac{1+\cos\alpha+\frac{\sqrt{3}}{3}\sin\alpha -a_{1}}{\frac{\sqrt{3}}{3}+\cot\alpha}$.
By Corollary~\ref{cor:4}, we deduce that if $R^{\alpha}$ cannot be covered, then
\begin{align*}
	\sum A(\triangle_{n})
	&< \frac{\sqrt{3}}{4}a_{1}^{2}+\frac{(1+\cos\alpha+\frac{\sqrt{3}}{3}\sin\alpha
		-a_{1})^{2}}{2(\frac{\sqrt{3}}{3}+\cot\alpha)}+ \frac{\sqrt{3}}{2}a_{2}(1+\cos\alpha+\frac{\sqrt{3}}{3}\sin\alpha -a_{1})\\
	&\leq\frac{\sqrt{3}}{4}a_{1}^{2}+\frac{(1+\cos\alpha+\frac{\sqrt{3}}{3}\sin\alpha -a_{1})^{2}}{2(\frac{\sqrt{3}}{3}+\cot\alpha)}+ \frac{\sqrt{3}}{2}a_{1}(1+\cos\alpha+\frac{\sqrt{3}}{3}\sin\alpha -a_{1})\\
	&\leq\frac{\sqrt{3}}{4}(1+\cos\alpha+\frac{\sqrt{3}}{3}\sin\alpha)^2.
\end{align*}
This inequality leads to a contradiction.

By the discussions above we know that $\varrho({R}^{\alpha}, \triangle)\leq\frac{\sqrt{3}(\sqrt{3}+\sqrt{3}\cos\alpha+\sin\alpha)^{2}}{12\sin\alpha}$.
However, any equilateral triangle with side length less than $1+\cos\alpha+\frac{\sqrt{3}}{3}\sin\alpha$ cannot parallel cover ${R}^{\alpha}$, hence
$\varrho({R}^{\alpha}, \triangle)\geq\frac{\sqrt{3}(\sqrt{3}+\sqrt{3}\cos\alpha+\sin\alpha)^{2}}{12\sin\alpha}$.
As a consequence, $\varrho({R}^{\alpha}, \triangle)=\frac{\sqrt{3}(\sqrt{3}+\sqrt{3}\cos\alpha+\sin\alpha)^{2}}{12\sin\alpha}$.
\end{proof}

\begin{thm}\label{thm:2}
If $\frac{\pi}{3} < \alpha \leq \frac{5\pi}{12}$ and
$\sum A(\triangle_{n}) \geq \frac{\sqrt{3}}{4}(1+\frac{2\sqrt{3}}{3}\sin\alpha)^{2}$, then $\{\triangle_{n}\}$ permits a parallel covering of $R^{\alpha}$ and thus $\varrho({R}^{\alpha}, \triangle)=\frac{\sqrt{3}(\sqrt{3}+2\sin\alpha)^{2}}{12\sin\alpha}$.
\end{thm}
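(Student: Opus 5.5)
The plan follows the proof of Theorem~\ref{thm:1}: place the largest triangle first, then cover what remains with one of the layer corollaries. Put $L:=1+\frac{2\sqrt{3}}{3}\sin\alpha$.

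\emph{Geometric setup and trivial case.} For $\frac{\pi}{3}<\alpha\le\frac{\pi}{2}$, $L$ should be the side of the smallest equilateral triangle, with a side parallel to a side of $R^{\alpha}$, that contains $R^{\alpha}$. I would first verify this directly. Orient $R^{\alpha}$ so that its long diagonal direction is vertical and the triangle points upward. The two slanted sides of the triangle then pass through two vertices of the rhombus, and its base contains a side. This orientation beats the one used in Theorem~\ref{thm:1} exactly when $\cos\alpha+\frac{\sqrt{3}}{3}\sin\alpha>\frac{2\sqrt{3}}{3}\sin\alpha$, i.e.\ when $\alpha>\frac{\pi}{3}$. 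Two consequences follow. If $a_{1}\ge L$, then $\triangle_{1}$ alone covers $R^{\alpha}$. Conversely, a single triangle of area slightly below $\frac{\sqrt{3}}{4}L^{2}$ cannot cover, which gives the lower bound $\varrho(R^{\alpha},\triangle)\ge\frac{\sqrt{3}}{4}L^{2}/\sin\alpha=\frac{\sqrt{3}(\sqrt{3}+2\sin\alpha)^{2}}{12\sin\alpha}$. So from now on I assume $a_{1}<L$.

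\emph{Placing $\triangle_{1}$ and bounding the remainder.} Inside the circumscribed triangle of side $L$, I place $\triangle'_{1}$ so that it shares the top vertex and both slanted sides. The uncovered part of $R^{\alpha}$ then lies below the horizontal line at height $\frac{\sqrt{3}}{2}a_{1}$ from the top. I would next determine the shape of $R^{\alpha}\setminus\triangle'_{1}$. I expect it to be contained in a trapezoid of type $Z_{1}$ with base angles $\alpha$ and $\frac{\pi}{3}$ (or of type $Z_{2}$ when $\alpha=\frac{\pi}{2}$), possibly degenerating to a triangle of type $T_{1}$ for large $a_{1}$. The alternative is to slide $\triangle'_{1}$ into the corner of the rhombus at the vertex with angle $\pi-\alpha$ or $\alpha$, so that the remainder is bounded by one side of angle $\alpha$ and one slanted side of slope $\frac{\pi}{3}$. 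This is exactly the shape handled by Lemma~\ref{lem:2} and Corollaries~\ref{cor:3}--\ref{cor:4}. I would pick the placement that makes the remainder a single $Z_{1}$ or $T_{1}$ region with explicit $a$ and $h$, both linear in $a_{1}$. There may be a threshold on $a_{1}$: above it the remainder is a triangle and Corollary~\ref{cor:4} applies; below it the remainder is a trapezoid and Corollary~\ref{cor:3} applies. In either regime I use the branch $\alpha\in(\frac{\pi}{3},\frac{\pi}{2}]$.

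\emph{Contradiction estimate.} Suppose $R^{\alpha}$ is not covered. In each regime the relevant corollary, applied with $\triangle_{2},\triangle_{3},\dots$ and largest side $a_{2}\le a_{1}$, gives
\[
\sum A(\triangle_{n})<\frac{\sqrt{3}}{4}a_{1}^{2}+\frac{1}{2}h(2a+\tfrac{\sqrt{3}}{3}h+h\cot\alpha)+\frac{\sqrt{3}}{2}a_{1}a+\frac{\sqrt{3}}{4}a_{1}h(\sqrt{3}+\cot\alpha),
\]
with $a=a(a_{1})$ and $h=h(a_{1})$ (set $a=0$ in the triangle regime). The right-hand side is a quadratic $q(a_{1})$. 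The goal is $q(a_{1})\le\frac{\sqrt{3}}{4}L^{2}$ for all $a_{1}\in(0,L)$, which contradicts the hypothesis. I would check $q(L)=\frac{\sqrt{3}}{4}L^{2}$, since the remainder vanishes there. Then I would show $q(L)-q(a_{1})=(L-a_{1})\,\ell(a_{1})$ with $\ell$ linear, so it suffices that $\ell\ge0$ at the two endpoints $a_{1}=0$ (or the regime threshold) and $a_{1}=L$.

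\emph{Main obstacle.} The main obstacle is this endpoint sign check, together with choosing the placement of $\triangle'_{1}$ so that the remainder really fits a single corollary. I expect the restriction $\alpha\le\frac{5\pi}{12}$ to appear here: at $a_{1}$ near $0$, the extra term $\frac{\sqrt{3}}{4}a_{1}h\cot\alpha$ and the trapezoid area should be dominated only when $\alpha$ is not too close to $\frac{\pi}{2}$. If the single-placement estimate fails near $a_{1}=0$, my first fallback is to treat small $a_{1}$ separately. In that case I would apply Corollary~\ref{cor:3} to all of $R^{\alpha}$, cut along a diagonal or a $\frac{\pi}{3}$-line into a $Z_{1}$-type piece and a $T_{1}$-type piece, using the smallness of $a_{1}$ to absorb the linear terms.
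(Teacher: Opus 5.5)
Your lower bound and the value $L=1+\frac{2\sqrt{3}}{3}\sin\alpha$ are correct. The upper-bound argument has a genuine gap, though: the inequality $q(a_{1})\le\frac{\sqrt{3}}{4}L^{2}$ is false, and it already fails at the endpoint $a_{1}=L$, which you treat as safe. Near $a_{1}=L$, place $\triangle'_{1}$ so that it keeps the base line and the slanted side of the circumscribed triangle through the vertex of $R^{\alpha}$ with angle $\pi-\alpha$. The uncovered part is then the triangle at the vertex with angle $\alpha$ (the paper's $T_{3}^{\ast}$). It has base $u:=L-a_{1}$ and height $u/k$, where $k:=\frac{\sqrt{3}}{3}+\cot\alpha$. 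Corollary~\ref{cor:4} in the branch $(\frac{\pi}{3},\frac{\pi}{2}]$, combined with $a_{2}\le a_{1}$, gives
\[
q(a_{1})=\frac{\sqrt{3}}{4}L^{2}+\frac{\sqrt{3}}{4}\Bigl(\frac{\sqrt{3}+\cot\alpha}{\frac{\sqrt{3}}{3}+\cot\alpha}-2\Bigr)Lu+O(u^{2}).
\]
The bracket is positive exactly when $\cot\alpha<\frac{\sqrt{3}}{3}$, that is, for every $\alpha>\frac{\pi}{3}$. So $\ell(L)<0$, and your bound exceeds the threshold for all $a_{1}$ slightly below $L$.

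Other placements and bases do no better. Losing area at the vertex with angle $\pi-\alpha$, or using a slanted base to reach the first branch, gives a larger positive linear coefficient. Losing a strip along a side leaves a term $\frac{\sqrt{3}}{2}a_{2}a$ with $a\approx 1$, which does not vanish as $a_{1}\to L$. This is exactly why the one-step argument of Theorem~\ref{thm:1} stops at $\frac{\pi}{3}$: in the first branch the corresponding coefficient is $0$. The failure is not confined to the endpoint either. At $\alpha=\frac{5\pi}{12}$ with $a_{1}=a_{2}=\frac{2\sqrt{3}}{3}\sin\alpha$, the corner placement plus Lemma~\ref{lem:2} on the trapezoidal remainder gives about $2.22$, whereas $\frac{\sqrt{3}}{4}L^{2}\approx 1.94$.

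The missing idea is that the largest remaining triangle must be bounded by the size of the uncovered region, not merely by $a_{1}$. The paper achieves this by case splits on $a_{2}$, and for smaller $a_{1}$ also on $a_{3}$, $a_{4}$, $a_{5}$. In each split, either the next triangle is large enough that the explicitly placed triangles already cover $R^{\alpha}$ (Figs.~\ref{Fig.6}, \ref{Fig.8}, \ref{Fig.11}), or it is bounded in terms of the remainder. For example, Subcase~1.2 uses $a_{2}\le L-a_{1}$, which reduces the linear term above to $-\frac{\sqrt{3}}{2}Lu$. In the intermediate ranges two to five triangles are placed explicitly, and the rest is handled by Lemma~\ref{lem:1} or the corollaries. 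Multivariable quadratics are then maximised over polyhedral regions, in three ranges of $a_{1}$ separated at $\frac{2\sqrt{3}}{3}\sin\alpha$ and $1+\frac{\sqrt{3}}{3}\sin\alpha-\cos\alpha$.

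Near $a_{1}=L$ alone you could repair your step with Lemma~\ref{lem:1}, since $T_{3}^{\ast}$ lies in an equilateral triangle of side $\frac{2u}{\sqrt{3}\,k}$ and no linear term then appears; but that is only a local fix. Finally, $\frac{5\pi}{12}$ has nothing to do with small $a_{1}$. It is where the maximiser in Subcase~2.1(ii) switches its active constraint, and $\varrho$ is given by the same formula on both sides of it.
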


\begin{proof}
We claim that $\{\triangle_{n}\}$ permits a parallel covering of $R^{\alpha}$.
We may assume that $a_{1} < 1 + \frac{2\sqrt{3}}{3}\sin\alpha$, otherwise $R^{\alpha}$ can be parallel covered by $\triangle_{1}$.

Throughout the upper-bound estimates below, we fix an arbitrary $\alpha$ in the stated range.
For this fixed $\alpha$, each right-hand side is a quadratic polynomial in the relevant variables $a_{i}$, considered on the corresponding admissible domain.
Its maximum over the closure of that domain is determined by checking the stationary points in the interior and considering all boundary cases.
In each case, we give a point at which the maximum is attained.
The same procedure is used in the proof of Theorem~\ref{thm:3}.

{\textit Case 1}: $1 + \frac{\sqrt{3}}{3}\sin\alpha-\cos\alpha\leq a_{1}<1 + \frac{2\sqrt{3}}{3}\sin\alpha$.

\begin{figure}[htb]
	\centering
    \includegraphics[width=10.5cm]{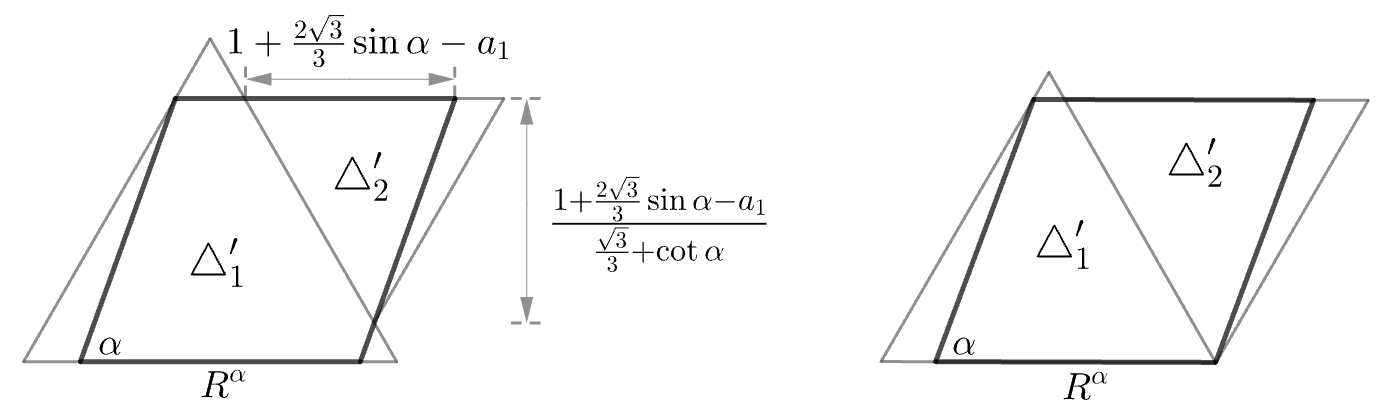}
	\caption{$a_{2}=\frac{2\sqrt{3}}{3}\cdot\frac{1+\frac{2\sqrt{3}}{3}\sin\alpha-a_{1}}{\frac{\sqrt{3}}{3}+\cot\alpha}$}
	\label{Fig.6}
\end{figure}

We assume that $a_{2}<\frac{2\sqrt{3}}{3}\cdot\frac{1+\frac{2\sqrt{3}}{3}\sin\alpha-a_{1}}{\frac{\sqrt{3}}{3}+\cot\alpha}$, otherwise $R^{\alpha}$ can be parallel covered by $\triangle_{1}$ and $\triangle_{2}$ as shown in Fig.~\ref{Fig.6}.

{\textit Subcase 1.1}: $ 1+\frac{2\sqrt{3}}{3}\sin\alpha-a_{1}<a_{2}<\frac{2\sqrt{3}}{3}\cdot\frac{1+\frac{2\sqrt{3}}{3}\sin\alpha-a_{1}}{\frac{\sqrt{3}}{3}+\cot\alpha}$.
\begin{figure}[htb]
	\centering
	\includegraphics[width=12cm]{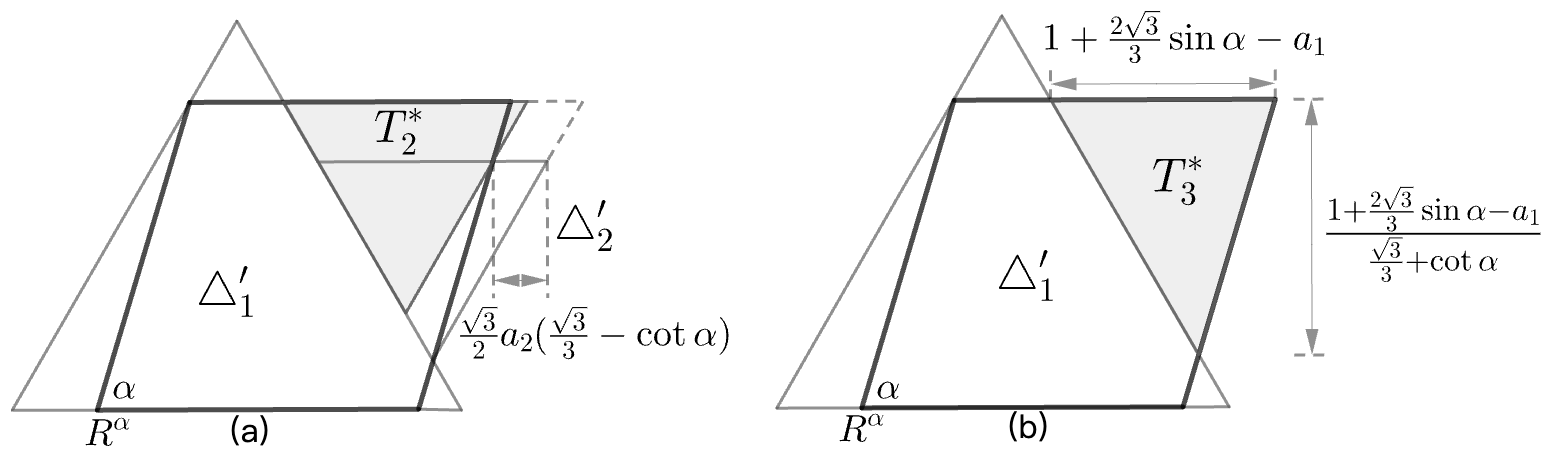}
	\caption{$0<a_{2}<\frac{2\sqrt{3}}{3}\cdot\frac{1+\frac{2\sqrt{3}}{3}\sin\alpha-a_{1}}{\frac{\sqrt{3}}{3}+\cot\alpha}$}
	\label{Fig.7}
\end{figure}

We place $\triangle_{1}$ and $\triangle_{2}$ as in Fig.~\ref{Fig.7}(a).
The remaining equilateral triangles are used for the covering of the equilateral triangle
$T_{2}^{\ast}\supset R^{\alpha}\setminus (\triangle'_{1}\cup\triangle'_{2})$ with side length
$\frac{2\sqrt{3}}{3}\cdot\frac{1+\frac{2\sqrt{3}}{3}\sin\alpha-a_{1}}{\frac{\sqrt{3}}{3}+\cot\alpha}-\frac{\sqrt{3}}{2}a_{2}(\frac{\sqrt{3}}{3}-\cot\alpha)$ and height $\frac{\sqrt{3}}{2}\cdot(\frac{2\sqrt{3}}{3}\cdot\frac{1+\frac{2\sqrt{3}}{3}\sin\alpha-a_{1}}
{\frac{\sqrt{3}}{3}+\cot\alpha}-\frac{\sqrt{3}}{2}a_{2}(\frac{\sqrt{3}}{3}-\cot\alpha))$.
By Lemma~\ref{lem:1}, we deduce that if $R^{\alpha}$ cannot be covered, then
\begin{align*}
	\sum A(\triangle_{n})
	&\!<\! \frac{\sqrt{3}}{4}(a_{1}^{2}+a_{2}^{2})+2\cdot\frac{1}{2}\cdot\frac{\sqrt{3}}{2}\cdot(\frac{2\sqrt{3}}{3}\cdot\frac{1+\frac{2\sqrt{3}}{3}\sin\alpha-a_{1}}{\frac{\sqrt{3}}{3}+\cot\alpha}\!-\!\frac{\sqrt{3}}{2}a_{2}(\frac{\sqrt{3}}{3}-\cot\alpha))^2.
\end{align*}

The maximum of the right-hand side over the closure of the feasible region is $\frac{\sqrt{3}}{4}(1+\frac{2\sqrt{3}}{3}\sin\alpha)^2$ (which is attained at $a_{1}=1+\frac{2\sqrt{3}}{3}\sin\alpha$ and $a_{2}=0$) provided that the feasible region is given by $1+\frac{\sqrt{3}}{3}\sin\alpha-\cos\alpha\le a_{1}<1+\frac{2\sqrt{3}}{3}\sin\alpha$,
$1+\frac{2\sqrt{3}}{3}\sin\alpha-a_{1}<a_{2}<\frac{2\sqrt{3}}{3}\cdot\frac{1+\frac{2\sqrt{3}}{3}\sin\alpha-a_{1}}{\frac{\sqrt{3}}{3}+\cot\alpha}$ and $a_{1}\ge a_{2}$.
This leads to a contradiction.

{\textit Subcase 1.2}: $0<a_{2}\leq 1+\frac{2\sqrt{3}}{3}\sin\alpha-a_{1}$.

We place $\triangle_{1}$ as in Fig.~\ref{Fig.7}(b).
The remaining equilateral triangles are used for the covering of the triangle $T_{3}^{\ast}= R^{\alpha}\setminus \triangle'_{1}$ with base length $1+\frac{2\sqrt{3}}{3}\sin\alpha-a_{1}$ and height
$\frac{1+\frac{2\sqrt{3}}{3}\sin\alpha-a_{1}}{\frac{\sqrt{3}}{3}+\cot\alpha}$.
By Corollary~\ref{cor:4}, we deduce that if $R^{\alpha}$ cannot be covered, then
\begin{align*}
	\sum A(\triangle_{n})
	&< \frac{\sqrt{3}}{4}a_{1}^{2} +\frac{(1+\frac{2\sqrt{3}}{3}\sin\alpha-a_{1})^2}{2(\frac{\sqrt{3}}{3}+\cot\alpha)}
	+\frac{\sqrt{3}}{4}a_{2}\cdot\frac{1+\frac{2\sqrt{3}}{3}\sin\alpha-a_{1}}{\frac{\sqrt{3}}{3}+\cot\alpha}(\sqrt{3}+\cot\alpha)\\
	&\leq\frac{\sqrt{3}}{4}a_{1}^{2} +\frac{(1+\frac{2\sqrt{3}}{3}\sin\alpha-a_{1})^2}{2(\frac{\sqrt{3}}{3}+\cot\alpha)}
	+\frac{\sqrt{3}}{4}\cdot\frac{(1+\frac{2\sqrt{3}}{3}\sin\alpha-a_{1})^2}{\frac{\sqrt{3}}{3}+\cot\alpha}(\sqrt{3}+\cot\alpha)\\
	&=\frac{\sqrt{3}}{4}a_{1}^{2}
	+\frac{1}{4}\cdot\frac{(1+\frac{2\sqrt{3}}{3}\sin\alpha-a_{1})^2}{\frac{\sqrt{3}}{3}+\cot\alpha}(5+\sqrt{3}\cot\alpha).
\end{align*}

The maximum of the right-hand side over the closure of the feasible region is $\frac{\sqrt{3}}{4}(1+\frac{2\sqrt{3}}{3}\sin\alpha)^2$
 (which is attained at $a_{1}=1+\frac{2\sqrt{3}}{3}\sin\alpha$)
provided that the feasible region is given by $1+\frac{\sqrt{3}}{3}\sin\alpha-\cos\alpha\le a_{1}<1+\frac{2\sqrt{3}}{3}\sin\alpha$.
This leads to a contradiction.

{\textit Case 2}: $\frac{2\sqrt{3}}{3}\sin\alpha \leq a_{1}<1 + \frac{\sqrt{3}}{3}\sin\alpha-\cos\alpha$.

\begin{figure}[htb]
	\centering
	\includegraphics[width=14.5cm]{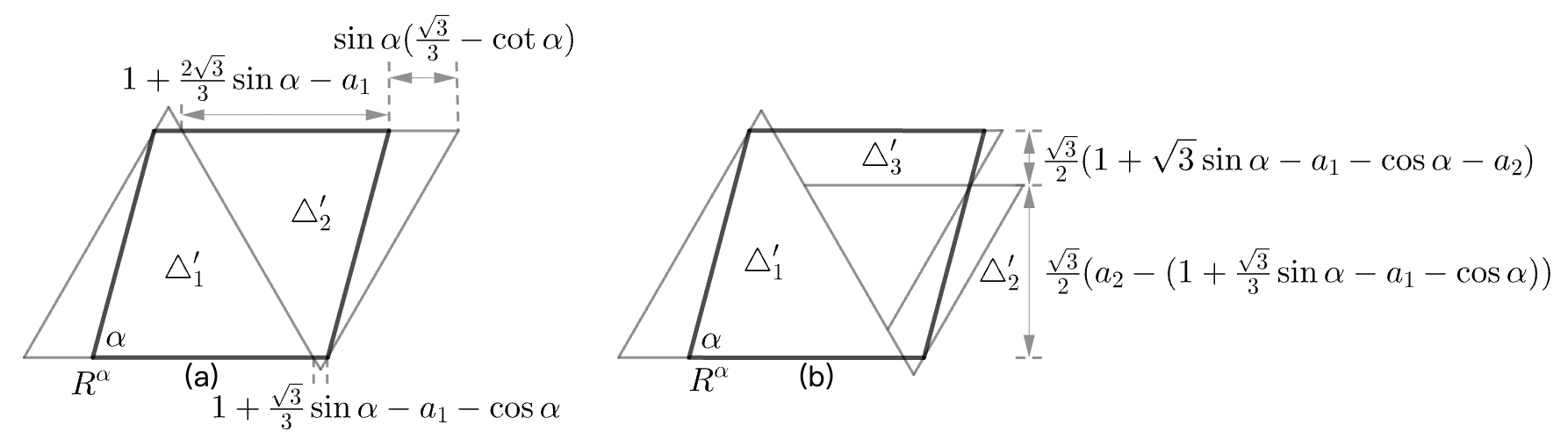}
	\caption{$\frac{2\sqrt{3}}{3}\sin\alpha \leq a_{1}<1 + \frac{\sqrt{3}}{3}\sin\alpha-\cos\alpha$}
	\label{Fig.8}
\end{figure}

We assume that $a_{2}<1+\sqrt{3}\sin\alpha-a_{1}-\cos\alpha$, otherwise $R^{\alpha}$ can be parallel covered by $\triangle_{1}$ and $\triangle_{2}$, as shown in Fig.~\ref{Fig.8}(a).

{\textit Subcase 2.1}: $\frac{3}{2}(1+\frac{\sqrt{3}}{3}\sin\alpha-a_{1}-\cos\alpha)<a_{2}< 1+\sqrt{3}\sin\alpha-a_{1}-\cos\alpha$.

We assume that
$a_{3}<1+\frac{2\sqrt{3}}{3}\sin
\alpha-a_{1}+\frac{\sqrt{3}}{2}(1+\sqrt{3}\sin\alpha-a_{1}-\cos\alpha-a_{2})(\frac{\sqrt{3}}{3}-\cot\alpha)$, otherwise $R^{\alpha}$ can be parallel covered by $\triangle_{1}$, $\triangle_{2}$ and $\triangle_{3}$, as shown in Fig.~\ref{Fig.8}(b).

(i) $1+\frac{2\sqrt{3}}{3}\sin\alpha-a_{1}\!\leq \!a_{3}\!<\! 1+\frac{2\sqrt{3}}{3}\sin\alpha-a_{1}+\frac{\sqrt{3}}{2}(1+\sqrt{3}\sin\alpha-a_{1}-\cos\alpha-a_{2})(\frac{\sqrt{3}}{3}-\cot\alpha)$.

\begin{figure}[htb]
	\centering
	\includegraphics[width=14cm]{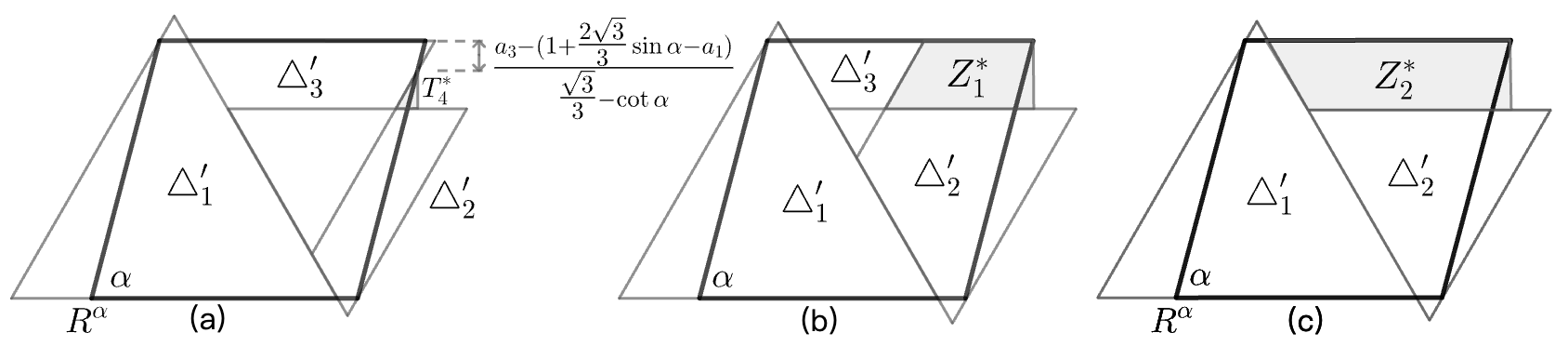}
	\caption{$a_{2}>\frac{3}{2}(1+\frac{\sqrt{3}}{3}\sin\alpha-a_{1}-\cos\alpha)$}
	\label{Fig.9}
\end{figure}

We place $\triangle_{1}$, $\triangle_{2}$ and $\triangle_{3}$ as in Fig.~\ref{Fig.9}(a).
The remaining equilateral triangles are used for the covering of the right triangle $T_{4}^{\ast}\supset R^{\alpha}\setminus (\triangle'_{1}\cup\triangle'_{2}\cup\triangle'_{3})$ with leg lengths
$\frac{\sqrt{3}}{3}\cdot(\frac{\sqrt{3}}{2}(1+\sqrt{3}\sin\alpha-a_{1}-\cos\alpha-a_{2})
-\frac{a_{3}-(1+\frac{2\sqrt{3}}{3}\sin\alpha-a_{1})}{\frac{\sqrt{3}}{3}-\cot\alpha})$ and $\frac{\sqrt{3}}{2}(1+\sqrt{3}\sin\alpha-a_{1}-\cos\alpha-a_{2})
-\frac{a_{3}-(1+\frac{2\sqrt{3}}{3}\sin\alpha-a_{1})}{\frac{\sqrt{3}}{3}-\cot\alpha}$.
By Corollary~\ref{cor:6} and $a_{4}\leq a_{3}$, we deduce that if $R^{\alpha}$ cannot be covered, then
\begin{align*}
	\sum A(\triangle_{n})
	&< \frac{\sqrt{3}}{4}(a_{1}^2+a_{2}^2+a_{3}^2)\\
	&~~~+\frac{\sqrt{3}}{6}\cdot\left(\frac{\sqrt{3}}{2}(1+\sqrt{3}\sin\alpha-a_{1}-\cos\alpha-a_{2})
	-\frac{a_{3}-(1+\frac{2\sqrt{3}}{3}\sin\alpha-a_{1})}{\frac{\sqrt{3}}{3}-\cot\alpha}\right)^2\\[-2pt]
	&~~~+\frac{3}{4}a_{3}\cdot\left(\frac{\sqrt{3}}{2}(1+\sqrt{3}\sin\alpha-a_{1}-\cos\alpha-a_{2})
	-\frac{a_{3}-(1+\frac{2\sqrt{3}}{3}\sin\alpha-a_{1})}{\frac{\sqrt{3}}{3}-\cot\alpha}\right).
\end{align*}

The maximum of the right-hand side is
$\frac{7\sqrt{3}}{12}\sin^{2}\alpha+\frac{1}{4}\sin\alpha\cos\alpha-\frac{5}{8}\sin\alpha-\frac{5\sqrt{3}}{8}\cos\alpha+\frac{7\sqrt{3}}{8}$
provided that the feasible region is given by $\frac{2\sqrt{3}}{3}\sin\alpha \leq a_{1}<1 + \frac{\sqrt{3}}{3}\sin\alpha-\cos\alpha$, $\frac{3}{2}(1+\frac{\sqrt{3}}{3}\sin\alpha-a_{1}-\cos\alpha)<a_{2}< 1+\sqrt{3}\sin\alpha-a_{1}-\cos\alpha$, $1+\frac{2\sqrt{3}}{3}\sin\alpha-a_{1}\leq a_{3}< 1+\frac{2\sqrt{3}}{3}\sin\alpha-a_{1}+\frac{\sqrt{3}}{2}(1+\sqrt{3}\sin\alpha-a_{1}-\cos\alpha-a_{2})(\frac{\sqrt{3}}{3}-\cot\alpha)$ and $a_{1}\geq a_{2}\geq a_{3}$.
This maximum, which is attained at $a_{1}=a_{2}=\frac{2\sqrt{3}}{3}\sin\alpha$ and
$a_{3}=1$, is less than $\frac{\sqrt{3}}{4}(1+\frac{2\sqrt{3}}{3}\sin\alpha)^2$.
This leads to a contradiction.

(ii) $\frac{3-\sqrt{6}}{2}\cdot(1+\frac{2\sqrt{3}}{3}\sin\alpha-a_{1})\leq a_{3}< 1+\frac{2\sqrt{3}}{3}\sin\alpha-a_{1}$.

We place $\triangle_{1}$, $\triangle_{2}$ and $\triangle_{3}$ as in Fig.~\ref{Fig.9}(b).
The remaining equilateral triangles are used for the covering of the trapezoid
$Z_{1}^{\ast}\supset R^{\alpha}\setminus(\triangle'_{1}\cup\triangle'_{2}\cup\triangle'_{3})$
 with base lengths
 $1+\frac{2\sqrt{3}}{3}\sin\alpha-a_{1}-a_{3}$ and
$\frac{3}{2}-\frac{3}{2}a_{1}-\frac{1}{2}a_{2}-a_{3}-\frac{1}{2}\cos\alpha+\frac{7\sqrt{3}}{6}\sin\alpha$ and height $\frac{\sqrt{3}}{2}(1+\sqrt{3}\sin\alpha-a_{1}-\cos\alpha-a_{2})$.
By Corollary~\ref{cor:5} and $a_{4}\leq a_{3}$, we deduce that if $R^{\alpha}$ cannot be covered, then
\begin{align*}
	\sum A(\triangle_{n})
	&< \frac{\sqrt{3}}{4}(a_{1}^2+a_{2}^2+a_{3}^2)+\frac{\sqrt{3}}{4}(1+\sqrt{3}\sin\alpha-a_{1}-\cos\alpha-a_{2})\\
	&~~~\cdot(\frac{5}{2}-\frac{5}{2}a_{1}-\frac{1}{2}a_{2}-2a_{3}-\frac{1}{2}\cos\alpha+\frac{11\sqrt{3}}{6}\sin\alpha)\\
	&~~~+\frac{\sqrt{3}}{8}a_{3}\cdot(7-7a_{1}-3a_{2}-4a_{3}-3\cos\alpha
	+\frac{17\sqrt{3}}{3}\sin\alpha).
\end{align*}

The maximum of the right-hand side over the closure of the feasible region is $\frac{1}{16}\cdot(2\sqrt{3}\cos^2\alpha-4\sin\alpha\cos\alpha-(9\sqrt{2}+3\sqrt{3})\cos\alpha+6\sqrt{3}\sin^2\alpha+3(1+\sqrt{6})\sin\alpha-9\sqrt{2}+16\sqrt{3})$
provided that the feasible region is given by
$\frac{2\sqrt{3}}{3}\sin\alpha \leq a_{1}<1 + \frac{\sqrt{3}}{3}\sin\alpha-\cos\alpha$, $\frac{3}{2}(1+\frac{\sqrt{3}}{3}\sin\alpha-a_{1}-\cos\alpha)<a_{2}< 1+\sqrt{3}\sin\alpha-a_{1}-\cos\alpha$,
$\frac{3-\sqrt{6}}{2}\cdot(1+\frac{2\sqrt{3}}{3}\sin\alpha-a_{1})\leq a_{3}< 1+\frac{2\sqrt{3}}{3}\sin\alpha-a_{1}$
and $a_{1}\geq a_{2} \geq a_{3}$.
This maximum, which is attained at
$a_{1}=\frac{2\sqrt{3}}{3}\sin\alpha$ and $a_{2}=a_{3}=\frac{3-\sqrt{6}}{2}$,
is less than $\frac{\sqrt{3}}{4}(1+\frac{2\sqrt{3}}{3}\sin\alpha)^2$.
This leads to a contradiction.

(iii) $0<a_{3}< \frac{3-\sqrt{6}}{2}\cdot(1+\frac{2\sqrt{3}}{3}\sin\alpha-a_{1})$.

We place $\triangle_{1}$ and $\triangle_{2}$ as in Fig.~\ref{Fig.9}(c).
The remaining equilateral triangles are used for the covering of the trapezoid
$Z_{2}^{\ast}\supset R^{\alpha}\setminus (\triangle'_{1}\cup\triangle'_{2})$ with base lengths
$\frac{1}{2}-\frac{1}{2}a_{1}+\frac{1}{2}a_{2}+\frac{1}{2}\cos\alpha+\frac{\sqrt{3}}{6}\sin\alpha$ and $1+\frac{2\sqrt{3}}{3}\sin\alpha-a_{1}$ and height $\frac{\sqrt{3}}{2}(1+\sqrt{3}\sin\alpha-a_{1}-\cos\alpha-a_{2})$.
By Corollary~\ref{cor:5}, we deduce that if $R^{\alpha}$ cannot be covered, then
\begin{align*}
	\sum A(\triangle_{n})
	&< \frac{\sqrt{3}}{4}(a_{1}^2+a_{2}^2)\\
	&~~~\!+\!\frac{\sqrt{3}}{4}(1+\sqrt{3}\sin\alpha-a_{1}-\cos\alpha-a_{2})(\frac{3}{2}-\frac{3}{2}a_{1}+\frac{1}{2}a_{2}+\frac{1}{2}\cos\alpha+\frac{5\sqrt{3}}{6}\sin\alpha)\\
	&~~~\!+\!\frac{\sqrt{3}}{2}a_{3}(\frac{5}{4}-\frac{5}{4}a_{1}-\frac{1}{4}a_{2}-\frac{1}{4}\cos\alpha+\frac{11\sqrt{3}}{12}\sin\alpha).
\end{align*}

The feasible region is defined by $\frac{2\sqrt{3}}{3}\sin\alpha \leq a_{1}<1 + \frac{\sqrt{3}}{3}\sin\alpha-\cos\alpha$, $\frac{3}{2}(1+\frac{\sqrt{3}}{3}\sin\alpha-a_{1}-\cos\alpha)<a_{2}<
1+\sqrt{3}\sin\alpha-a_{1}-\cos\alpha$,
$0<a_{3}< \frac{3-\sqrt{6}}{2}\cdot(1+\frac{2\sqrt{3}}{3}\sin\alpha-a_{1})$ and $a_{1}\geq a_{2} \geq a_{3}$.
The maximum of the right-hand side over the closure of the feasible region is attained at
$a_{1}=\frac{2\sqrt{3}}{3}\sin\alpha$ and $a_{2}=a_{3}=\frac{3-\sqrt{6}}{2}$.
Moreover, this maximum is less than $\frac{\sqrt{3}}{4}(1+\frac{2\sqrt{3}}{3}\sin\alpha)^2$.
This leads to a contradiction.

{\textit Subcase 2.2}: $0<a_{2}\leq \frac{3}{2}(1+\frac{\sqrt{3}}{3}\sin\alpha-a_{1}-\cos\alpha)$.
\begin{figure}[htb]
	\centering
	\includegraphics[width=7.5cm]{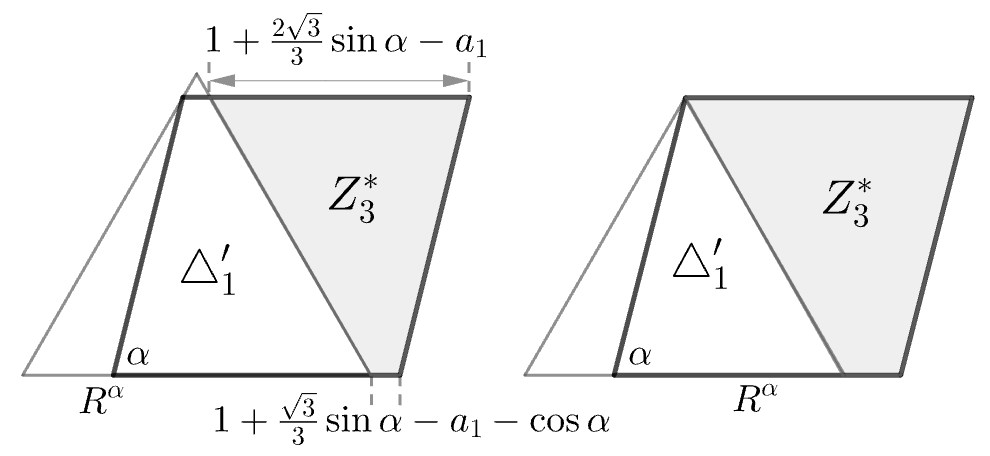}
	\caption{$0<a_{2}\leq \frac{3}{2}(1+\frac{\sqrt{3}}{3}\sin\alpha-a_{1}-\cos\alpha)$}
	\label{Fig.10}
\end{figure}

We place $\triangle_{1}$ as in Fig.~\ref{Fig.10}.
The remaining equilateral triangles are used for the covering of the trapezoid $Z^{\ast}_{3}=R^{\alpha}\setminus \triangle'_{1}$ with base lengths $1+\frac{\sqrt{3}}{3}\sin\alpha-a_{1}-\cos\alpha$ and $1+\frac{2\sqrt{3}}{3}\sin\alpha-a_{1}$ and height $\sin\alpha$.
By Lemma~\ref{lem:2}, we deduce that if $R^{\alpha}$ cannot be covered, then
\begin{align*}
	\sum A(\triangle_{n})
	&< \frac{\sqrt{3}}{4}a_{1}^{2}+\frac{1}{2}\sin\alpha(2+\sqrt{3}\sin\alpha-2a_{1}-\cos\alpha)\\
	&~~~+\frac{\sqrt{3}}{2}a_{2}(1+\frac{\sqrt{3}}{3}\sin\alpha-a_{1}-\cos\alpha)+\frac{\sqrt{3}}{4}a_{2}\sin\alpha(\sqrt{3}+\cot\alpha).
\end{align*}

The maximum is
$\frac{9\sqrt{3}\cos^{2}\alpha-12\sin\alpha\cos\alpha-27\sqrt{3}\cos\alpha+\sqrt{3}\sin^{2}\alpha+15\sin\alpha+18\sqrt{3}}{24}$
provided that the feasible region is given by
$\frac{2\sqrt{3}}{3}\sin\alpha \leq a_{1}<1 + \frac{\sqrt{3}}{3}\sin\alpha-\cos\alpha$,
$0<a_{2}\leq \frac{3}{2}(1+\frac{\sqrt{3}}{3}\sin\alpha-a_{1}-\cos\alpha)$ and $a_{1}\geq a_{2}$.
This maximum, which is attained at $a_{1}=\frac{2\sqrt{3}}{3}\sin\alpha$ and $a_{2}=\frac{3}{2}(1-\frac{\sqrt{3}}{3}\sin\alpha-\cos\alpha)$, is less than
$\frac{\sqrt{3}}{4}(1+\frac{2\sqrt{3}}{3}\sin\alpha)^2$.
This leads to a contradiction.

{\textit Case 3}: $0< a_{1}<\frac{2\sqrt{3}}{3}\sin\alpha$.

{\textit Subcase 3.1}: $a_{2}>\frac{1}{2}(1+(\sin\alpha-\frac{\sqrt{3}}{2}a_{1})(\frac{\sqrt{3}}{3}+\cot\alpha))$.

{\textit Subcase 3.1.1}: $a_{2}+\frac{\sqrt{3}}{2}a_{3}(\frac{\sqrt{3}}{3}+\cot\alpha)\geq1+(\sin\alpha-\frac{\sqrt{3}}{2}a_{1})(\frac{\sqrt{3}}{3}+\cot\alpha)$.
\begin{figure}[htb]
	\centering
	\includegraphics[width=10cm]{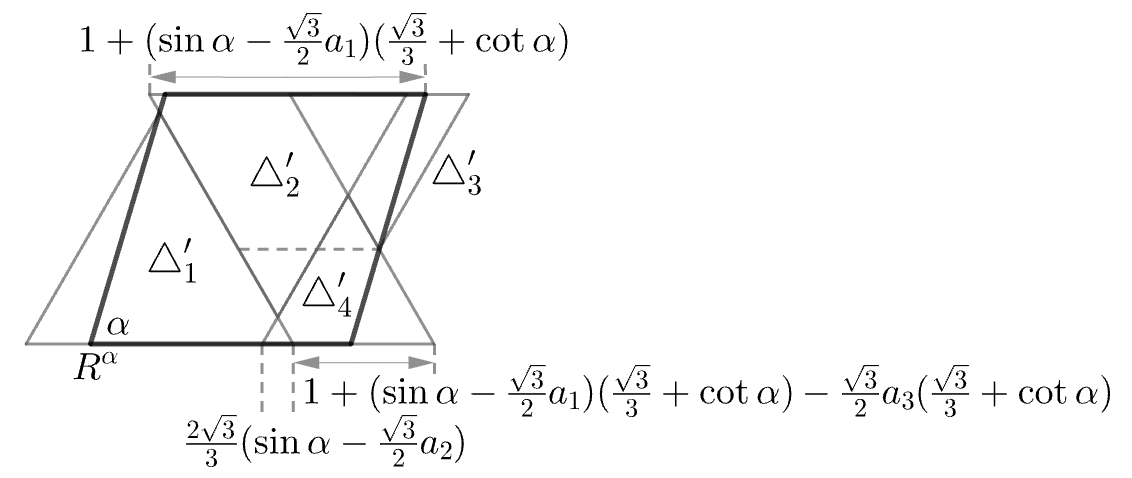}
	\caption{$a_{4}=1-a_{2}+\sqrt{3}\sin\alpha+\cos\alpha-\frac{1}{2}(a_{1}+a_{3})(1+\sqrt{3}\cot\alpha)$}
	\label{Fig.11}
\end{figure}

We assume that $a_{4}<1-a_{2}+\sqrt{3}\sin\alpha+\cos\alpha-\frac{1}{2}(a_{1}+a_{3})(1+\sqrt{3}\cot\alpha)$, otherwise $R^{\alpha}$ can be parallel covered by $\triangle_{1}$, $\triangle_{2}$, $\triangle_{3}$ and $\triangle_{4}$, as shown in Fig.~\ref{Fig.11}.

(i) $1-\frac{\sqrt{3}}{2}a_{1}(\frac{\sqrt{3}}{3}+\cot\alpha)<a_{4}<1-a_{2}+\sqrt{3}\sin\alpha+\cos\alpha-\frac{1}{2}(a_{1}+a_{3})(1+\sqrt{3}\cot\alpha)$.
\begin{figure}[h]
	\centering
	\includegraphics[width=10cm]{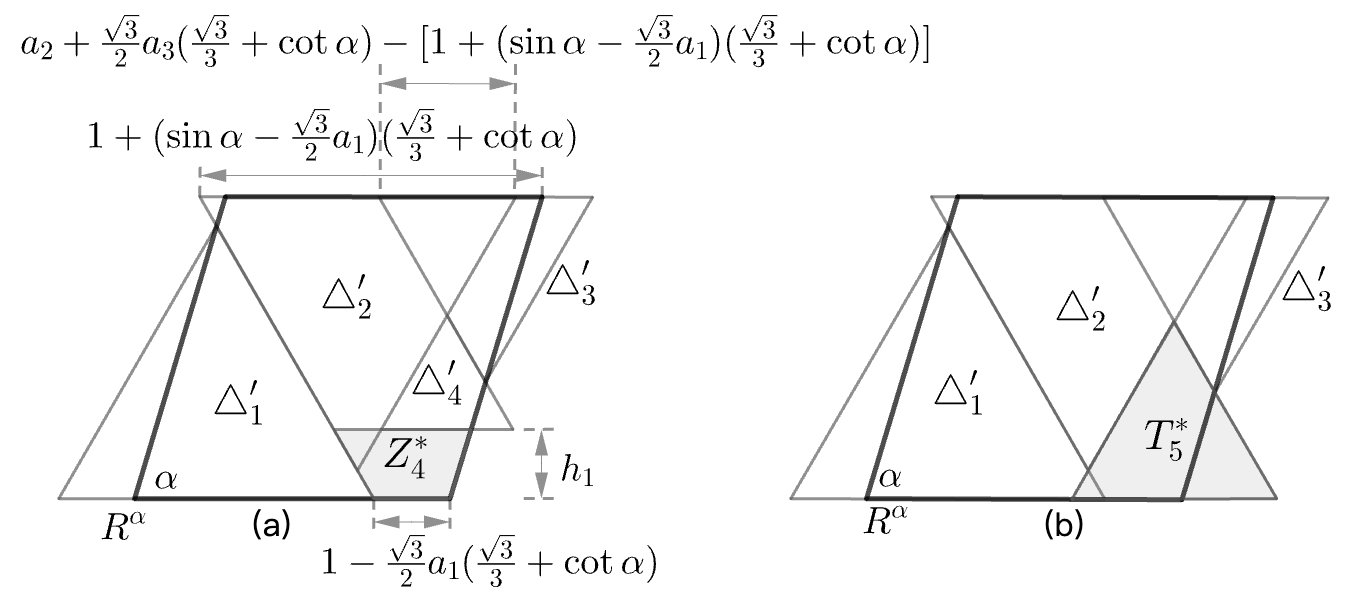}
	\caption{$0<a_{4}<1-a_{2}+\sqrt{3}\sin\alpha+\cos\alpha-\frac{1}{2}(a_{1}+a_{3})(1+\sqrt{3}\cot\alpha)$}
	\label{Fig.12}
\end{figure}

We place $\triangle_{1}$, $\triangle_{2}$, $\triangle_{3}$ and $\triangle_{4}$ as in Fig.~\ref{Fig.12}(a).
The remaining equilateral triangles are used for the covering of the trapezoid
$Z_{4}^{\ast}\supset R^{\alpha}\setminus (\triangle'_{1}\cup\triangle'_{2}\cup\triangle'_{3}\cup\triangle'_{4})$
with base lengths
$1-\frac{\sqrt{3}}{2}a_{1}(\frac{\sqrt{3}}{3}+\cot\alpha)$
and $1-\frac{\sqrt{3}}{2}a_{1}(\frac{\sqrt{3}}{3}+\cot\alpha)+h_{1}(\frac{\sqrt{3}}{3}+\cot\alpha)$
and height
$h_{1}:=\frac{\sqrt{3}}{2}(1-a_{2}-a_{4}+\sqrt{3}\sin\alpha+\cos\alpha-\frac{1}{2}(a_{1}+a_{3})(1+\sqrt{3}\cot\alpha))$.
By Lemma~\ref{lem:2} and $a_{5}\leq a_{4}$, we deduce that if $R^{\alpha}$ cannot be covered, then
\begin{align*}
	\sum A(\triangle_{n})
	&<\frac{\sqrt{3}}{4}(a_{1}^2+a_{2}^2+a_{3}^2+a_{4}^2)+\frac{1}{2}h_{1}\cdot(2-\sqrt{3}a_{1}(\frac{\sqrt{3}}{3}+\cot\alpha)+h_{1}(\frac{\sqrt{3}}{3}+\cot\alpha))\\
	&~~~+\frac{\sqrt{3}}{2}a_{4}(1-\frac{\sqrt{3}}{2}a_{1}(\frac{\sqrt{3}}{3}+\cot\alpha))
	+\frac{\sqrt{3}}{4}a_{4}h_{1}
	(\sqrt{3}+\cot\alpha).
\end{align*}

The maximum of the right-hand side over the closure of the feasible region is
$\sqrt{3}\sin^{2}\alpha+\frac{3\sqrt{3}}{4}(1-\frac{\sqrt{3}}{3}\sin\alpha-\cos\alpha)^2$
provided that the feasible region is given by
$0<a_{1}<\frac{2\sqrt{3}}{3}\sin\alpha$,
$a_{2}>\frac{1}{2} (1+(\sin\alpha-\frac{\sqrt{3}}{2}a_{1})(\frac{\sqrt{3}}{3}+\cot\alpha))$,
$a_{2}+\frac{\sqrt{3}}{2}a_{3}(\frac{\sqrt{3}}{3}+\cot\alpha)\geq1+(\sin\alpha-\frac{\sqrt{3}}{2}a_{1})(\frac{\sqrt{3}}{3}+\cot\alpha)$, $1-\frac{\sqrt{3}}{2}a_{1}(\frac{\sqrt{3}}{3}+\cot\alpha)<a_{4}<1-a_{2}+\sqrt{3}\sin\alpha+\cos\alpha-\frac{1}{2}(a_{1}+a_{3})(1+\sqrt{3}\cot\alpha)$ and $a_{1}\geq a_{2}\geq a_{3}\geq a_{4}$.
This maximum, which is attained at $a_{1}=a_{2}=a_{3}=\frac{2\sqrt{3}}{3}\sin\alpha$, $a_{4}=1-\frac{\sqrt{3}}{3}\sin\alpha-\cos\alpha$, is less than
$\frac{\sqrt{3}}{4}(1+\frac{2\sqrt{3}}{3}\sin\alpha)^2$.
This leads to a contradiction.

(ii) $0<a_{4}\leq 1-\frac{\sqrt{3}}{2}a_{1}(\frac{\sqrt{3}}{3}+\cot\alpha)$.

We place $\triangle_{1}$, $\triangle_{2}$ and $\triangle_{3}$ as in Fig.~\ref{Fig.12}(b).
The remaining equilateral triangles are used for the covering of the equilateral triangle $T_{5}^{\ast}\supset R^{\alpha}\setminus (\triangle'_{1}\cup\triangle'_{2}\cup\triangle'_{3})$ with side length
$1-a_{2}+\sqrt{3}\sin\alpha+\cos\alpha-\frac{1}{2}(a_{1}+a_{3})(1+\sqrt{3}\cot\alpha)$
and height $\frac{\sqrt{3}}{2}(1-a_{2}+\sqrt{3}\sin\alpha+\cos\alpha-\frac{1}{2}(a_{1}+a_{3})(1+\sqrt{3}\cot\alpha))$.
By Corollary~\ref{cor:4}, we deduce that if $R^{\alpha}$ cannot be covered, then
\begin{align*}
	\sum A(\triangle_{n})
	&< \frac{\sqrt{3}}{4}(a_{1}^2+a_{2}^2+a_{3}^2)\\
	&~~~+\frac{1}{2}\cdot\frac{2\sqrt{3}}{3}\cdot\frac{3}{4}
	(1-a_{2}+\sqrt{3}\sin\alpha+\cos\alpha-\frac{1}{2}(a_{1}+a_{3})(1+\sqrt{3}\cot\alpha))^2\\
	&~~~+\frac{\sqrt{3}}{4}a_{4}\cdot\frac{4\sqrt{3}}{3}\cdot\frac{\sqrt{3}}{2}(1-a_{2}+\sqrt{3}\sin\alpha+\cos\alpha-\frac{1}{2}(a_{1}+a_{3})(1+\sqrt{3}\cot\alpha)).
\end{align*}

The feasible region is defined by $0< a_{1}<\frac{2\sqrt{3}}{3}\sin\alpha$,
$a_{2}>\frac{1}{2} (1+(\sin\alpha-\frac{\sqrt{3}}{2}a_{1})(\frac{\sqrt{3}}{3}+\cot\alpha))$,
$a_{2}+\frac{\sqrt{3}}{2}a_{3}(\frac{\sqrt{3}}{3}+\cot\alpha)\geq1+(\sin\alpha-\frac{\sqrt{3}}{2}a_{1})(\frac{\sqrt{3}}{3}+\cot\alpha)$,
$0<a_{4}\leq 1-\frac{\sqrt{3}}{2}a_{1}(\frac{\sqrt{3}}{3}+\cot\alpha)$
and $a_{1}\geq a_{2}\geq a_{3}\geq a_{4}$.
The
maximum of the right-hand side is attained at
$a_{1}=a_{2}=a_{3}=\frac{\sqrt{3}+\sqrt{3}\cos\alpha+\sin\alpha}{2\sqrt{3}+3\cot\alpha}$
and
$a_{4}=\frac{3\sin(\alpha+\frac{\pi}{6})-\sin(2\alpha+\frac{\pi}{6})-1}
{3\cos\alpha+2\sqrt{3}\sin\alpha}$.
Moreover, this maximum is less than $\frac{\sqrt{3}}{4}(1+\frac{2\sqrt{3}}{3}\sin\alpha)^2$.
This leads to a contradiction.

{\textit Subcase 3.1.2}: $a_{2}+\frac{\sqrt{3}}{2}a_{3}(\frac{\sqrt{3}}{3}+\cot\alpha)<1+(\sin\alpha-\frac{\sqrt{3}}{2}a_{1})(\frac{\sqrt{3}}{3}+\cot\alpha)$.

{\textit Subcase 3.1.2.1}: $a_{3}>\frac{\sqrt{3}}{3}(1-(\frac{\sqrt{3}}{2}a_{1}+\frac{\sqrt{3}}{2}a_{2}-\sin\alpha)(\frac{\sqrt{3}}{3}+\cot\alpha))$.

\begin{figure}[htb]
	\centering
	\includegraphics[width=15cm]{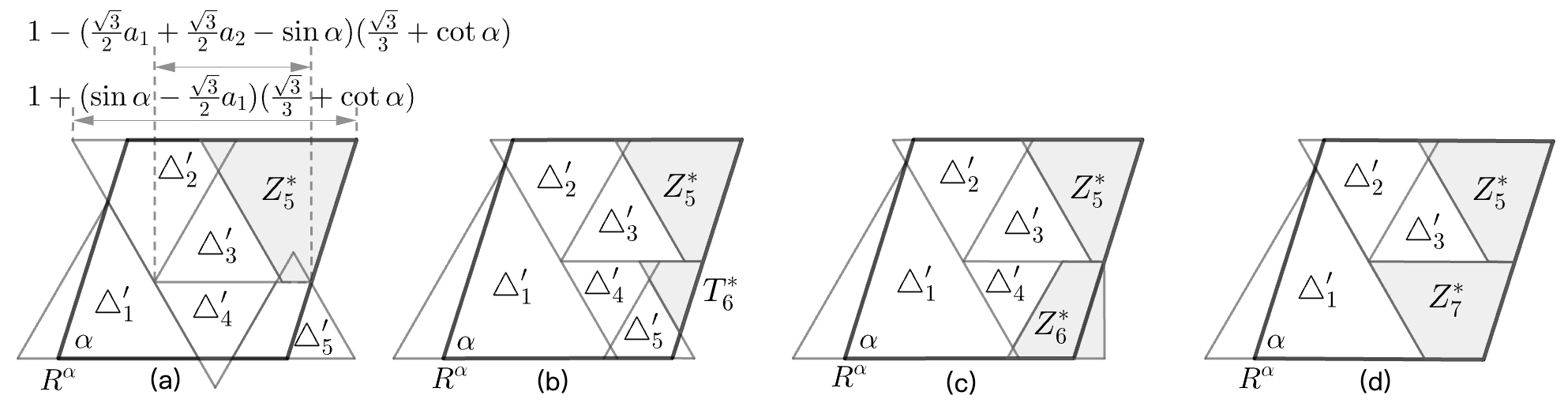}
	\caption{$a_{3}>\frac{\sqrt{3}}{3}(1-(\frac{\sqrt{3}}{2}a_{1}+\frac{\sqrt{3}}{2}a_{2}-\sin\alpha)(\frac{\sqrt{3}}{3}+\cot\alpha))$}
	\label{Fig.13}
\end{figure}

Throughout this subcase, let \(Z_{5}^{\ast}\) denote the
trapezoid with base lengths
$1-(\frac{\sqrt{3}}{2}a_{1}+\frac{\sqrt{3}}{2}a_{2}-\sin\alpha)(\frac{\sqrt{3}}{3}+\cot\alpha)-a_{3}$
and
$1+(\sin\alpha-\frac{\sqrt{3}}{2}a_{1})(\frac{\sqrt{3}}{3}+\cot\alpha)-a_{3}$
and height $\frac{\sqrt{3}}{2}a_{2}$, as shown in Fig.~\ref{Fig.13}.
In this subcase, we distinguish two possibilities according to \(a_{4}\).

{\textit Subcase 3.1.2.1.1}: $a_{4}> 1-(\frac{\sqrt{3}}{2}a_{1}+\frac{\sqrt{3}}{2}a_{2}-\sin\alpha)(\frac{\sqrt{3}}{3}+\cot\alpha)-a_{3}$.

In this subcase, we distinguish three possibilities according to \(a_{5}\).

(i) $a_{5}\geq 1-(\frac{\sqrt{3}}{2}a_{1}+\frac{\sqrt{3}}{2}a_{2}-\sin\alpha)(\frac{\sqrt{3}}{3}+\cot\alpha)-a_{4}+\frac{2\sqrt{3}}{3}(\sin\alpha-\frac{\sqrt{3}}{2}a_{2})$.

We place $\triangle_{1}$, $\triangle_{2}$, $\triangle_{3}$, $\triangle_{4}$ and $\triangle_{5}$ as in Fig.~\ref{Fig.13}(a).
The remaining equilateral triangles are used for the covering of the trapezoid $Z_{5}^{\ast}\supset R^{\alpha}\setminus (\triangle'_{1}\cup\triangle'_{2}\cup\triangle'_{3}\cup\triangle'_{4}\cup\triangle'_{5})$.
By Lemma~\ref{lem:2} and $a_{6}\leq a_{5}\leq a_{4}$, we deduce that if $R^{\alpha}$ cannot be covered, then
\begin{align*}
	\sum A(\triangle_{n})
	&<\frac{\sqrt{3}}{4}(a_{1}^2+a_{2}^2+a_{3}^2+2a_{4}^2)\\
	&~~~+\frac{\sqrt{3}}{4}a_{2}(2+(2\sin\alpha-\sqrt{3}a_{1}-\frac{\sqrt{3}}{2}a_{2})(\frac{\sqrt{3}}{3}+\cot\alpha)-2a_{3})\\
	&~~~+\frac{\sqrt{3}}{2}a_{4}(1\!-\!(\frac{\sqrt{3}}{2}a_{1}+\frac{\sqrt{3}}{2}a_{2}\!-\!\sin\alpha)(\frac{\sqrt{3}}{3}+\cot\alpha)-a_{3})+\frac{3}{8}a_{4}a_{2}(\sqrt{3}+\cot\alpha).
\end{align*}

The maximum over the closure of the feasible region is
$\frac{(7\sqrt{3}+3\cot\alpha)(1+\cos\alpha+\frac{\sin\alpha}{\sqrt{3}})^2}{4(2+\sqrt{3}\cot\alpha)^2}$ provided that the feasible region is given by
$0< a_{1}<\frac{2\sqrt{3}}{3}\sin\alpha$,
$a_{2}>\frac{1}{2} (1+(\sin\alpha-\frac{\sqrt{3}}{2}a_{1})(\frac{\sqrt{3}}{3}+\cot\alpha))$,
$a_{2}+\frac{\sqrt{3}}{2}a_{3}(\frac{\sqrt{3}}{3}+\cot\alpha)<1+(\sin\alpha-\frac{\sqrt{3}}{2}a_{1})(\frac{\sqrt{3}}{3}+\cot\alpha)$,
$a_{3}>\frac{\sqrt{3}}{3}(1-(\frac{\sqrt{3}}{2}a_{1}+\frac{\sqrt{3}}{2}a_{2}-\sin\alpha)(\frac{\sqrt{3}}{3}+\cot\alpha))$,
$a_{4}> 1-(\frac{\sqrt{3}}{2}a_{1}+\frac{\sqrt{3}}{2}a_{2}-\sin\alpha)(\frac{\sqrt{3}}{3}+\cot\alpha)-a_{3}$
and $a_{1}\geq a_{2}\geq a_{3}\geq a_{4}$.
This maximum, which is attained at $a_{1}=a_{2}=a_{3}=a_{4}=\frac{\sqrt{3}+\sqrt{3}\cos\alpha+\sin\alpha}
{2\sqrt{3}+3\cot\alpha}$, is less than
$\frac{\sqrt{3}}{4}(1+\frac{2\sqrt{3}}{3}\sin\alpha)^2$.
This leads to a contradiction.

(ii) $\sin\alpha-\frac{\sqrt{3}}{2}a_{2}\leq a_{5}<1-(\frac{\sqrt{3}}{2}a_{1}+\frac{\sqrt{3}}{2}a_{2}-\sin\alpha)(\frac{\sqrt{3}}{3}+\cot\alpha)-a_{4}+\frac{2\sqrt{3}}{3}(\sin\alpha-\frac{\sqrt{3}}{2}a_{2})$.

We place $\triangle_{1}$, $\triangle_{2}$, $\triangle_{3}$, $\triangle_{4}$ and $\triangle_{5}$ as in Fig.~\ref{Fig.13}(b).
The remaining equilateral triangles are used for the covering of $Z_{5}^{\ast}$ and $T^{\ast}_{6}$, where
$T_{6}^{\ast}$ has base length $1-(\frac{\sqrt{3}}{2}a_{1}+\frac{\sqrt{3}}{2}a_{2}-\sin\alpha)(\frac{\sqrt{3}}{3}+\cot\alpha)-a_{4}+\frac{2\sqrt{3}}{3}(\sin\alpha-\frac{\sqrt{3}}{2}(a_{2}+a_{5}))$ and height $\frac{1}{\frac{\sqrt{3}}{3}+\cot\alpha}(1-(\frac{\sqrt{3}}{2}a_{1}+\frac{\sqrt{3}}{2}a_{2}-\sin\alpha)(\frac{\sqrt{3}}{3}+\cot\alpha)-a_{4}+\frac{2\sqrt{3}}{3}
(\sin\alpha-\frac{\sqrt{3}}{2}(a_{2}+a_{5})))$.
By Corollaries~\ref{cor:3} and~\ref{cor:4}, together with $a_{i}\leq a_{5}$ for $i \geq 6$, we deduce that if $R^{\alpha}$ cannot be covered, then
\begin{align*}
	\sum A(\triangle_{n})
	&<\frac{\sqrt{3}}{4}(a_{1}^2+a_{2}^2+a_{3}^2+a_{4}^2+a_{5}^2)\\
	&~~~+\frac{\sqrt{3}}{4}a_{2}(2+(2\sin\alpha-\sqrt{3}a_{1}-\frac{\sqrt{3}}{2}a_{2})(\frac{\sqrt{3}}{3}+\cot\alpha)-2a_{3})\\
	&~~~+\frac{\sqrt{3}}{2}a_{5}(1-(\frac{\sqrt{3}}{2}a_{1}+\frac{\sqrt{3}}{2}a_{2}-\sin\alpha)(\frac{\sqrt{3}}{3}+\cot\alpha)-a_{3})\!+\!\frac{3}{8}a_{5}a_{2}(\sqrt{3}+\cot\alpha)\\
	 &~~~+\frac{(1-(\frac{\sqrt{3}}{2}a_{1}+\frac{\sqrt{3}}{2}a_{2}-\sin\alpha)(\frac{\sqrt{3}}{3}+\cot\alpha)-a_{4}+\frac{2\sqrt{3}}{3}(\sin\alpha-\frac{\sqrt{3}}{2}(a_{2}+a_{5})))^2}{2(\frac{\sqrt{3}}{3}+\cot\alpha)} \\
	 &~~~+\frac{\sqrt{3}}{4}a_{5}\frac{1\!-\!(\frac{\sqrt{3}}{2}a_{1}\!+\!\frac{\sqrt{3}}{2}a_{2}\!-\!\sin\alpha)(\frac{\sqrt{3}}{3}+\cot\alpha)\!-\!a_{4}+\frac{2\sqrt{3}}{3}(\sin\alpha\!-\!\frac{\sqrt{3}}{2}(a_{2}+a_{5}))}{\frac{\sqrt{3}}{3}+\cot\alpha}\\
	&~~~\cdot(\sqrt{3}+\cot\alpha)+\frac{\sqrt{3}}{4}a_{5}^2.
\end{align*}

The feasible region is defined by
$0< a_{1}<\frac{2\sqrt{3}}{3}\sin\alpha$,
$a_{2}>\frac{1}{2} (1+(\sin\alpha-\frac{\sqrt{3}}{2}a_{1})(\frac{\sqrt{3}}{3}+\cot\alpha))$,
$a_{2}+\frac{\sqrt{3}}{2}a_{3}(\frac{\sqrt{3}}{3}+\cot\alpha)<1+(\sin\alpha-\frac{\sqrt{3}}{2}a_{1})(\frac{\sqrt{3}}{3}+\cot\alpha)$, $a_{3}>\frac{\sqrt{3}}{3}(1-(\frac{\sqrt{3}}{2}a_{1}+\frac{\sqrt{3}}{2}a_{2}-\sin\alpha)(\frac{\sqrt{3}}{3}+\cot\alpha))$,
$a_{4}> 1-(\frac{\sqrt{3}}{2}a_{1}+\frac{\sqrt{3}}{2}a_{2}-\sin\alpha)(\frac{\sqrt{3}}{3}+\cot\alpha)-a_{3}$,
$\sin\alpha-\frac{\sqrt{3}}{2}a_{2}\leq a_{5}<1-(\frac{\sqrt{3}}{2}a_{1}+\frac{\sqrt{3}}{2}a_{2}-\sin\alpha)(\frac{\sqrt{3}}{3}+\cot\alpha)-a_{4}+\frac{2\sqrt{3}}{3}(\sin\alpha-\frac{\sqrt{3}}{2}a_{2})$
and $a_{1}\geq a_{2}\geq a_{3}\geq a_{4}\geq a_{5}$.
The
maximum of the right-hand side over the closure of the feasible region is attained at
$a_{1}=a_{2}=\tfrac{2(\sqrt{3}+\sin\alpha+\sqrt{3}\cos\alpha)}{5\sqrt{3}+3\cot\alpha}$
and
$a_{3}=a_{4}=a_{5}=\frac{3\sin 2\alpha - 7\sqrt{3}\cos 2\alpha- 6\cos(\alpha+\frac{\pi}{6})+ 4\sqrt{3}}{6\left(5\sin\alpha+\sqrt{3}\cos\alpha\right)}$.
Moreover, this maximum is less than $\frac{\sqrt{3}}{4}(1+\frac{2\sqrt{3}}{3}\sin\alpha)^2$.
This leads to a contradiction.

(iii) $0<a_{5}< \sin\alpha-\frac{\sqrt{3}}{2}a_{2}$.

We place $\triangle_{1}$, $\triangle_{2}$, $\triangle_{3}$ and $\triangle_{4}$ as in Fig.~\ref{Fig.13}(c).
The remaining equilateral triangles are used for the covering of $Z_{5}^{\ast}$ and $Z_{6}^{\ast}$, where
$Z_{6}^{\ast}$ has base lengths
$1-(\frac{\sqrt{3}}{2}a_{1}+\frac{\sqrt{3}}{2}a_{2}-\sin\alpha)(\frac{\sqrt{3}}{3}+\cot\alpha)-a_{4}$ and $1-(\frac{\sqrt{3}}{2}a_{1}+\frac{\sqrt{3}}{2}a_{2}-\sin\alpha)(\frac{\sqrt{3}}{3}+\cot\alpha)
-a_{4}+\frac{\sqrt{3}}{3}(\sin\alpha-\frac{\sqrt{3}}{2}a_{2})$
and height $\sin\alpha-\frac{\sqrt{3}}{2}a_{2}$.
By Corollaries~\ref{cor:3} and~\ref{cor:5}, together with $a_{i}\leq a_{5}$ for $i \geq 6$, we deduce that if $R^{\alpha}$ cannot be covered, then
\begin{align*}
	\sum A(\triangle_{n})
	&< \frac{\sqrt{3}}{4}(a_{1}^2+a_{2}^2+a_{3}^2+a_{4}^2)\\
	&~~~+\frac{\sqrt{3}}{4}a_{2}(2+(2\sin\alpha-\sqrt{3}a_{1}-\frac{\sqrt{3}}{2}a_{2})(\frac{\sqrt{3}}{3}+\cot\alpha)-2a_{3})\\
	&~~~+\frac{\sqrt{3}}{2}a_{5}(1-(\frac{\sqrt{3}}{2}a_{1}+\frac{\sqrt{3}}{2}a_{2}-\sin\alpha)(\frac{\sqrt{3}}{3}+\cot\alpha)-a_{3})\\
	&~~~+\frac{3}{8}a_{5}a_{2}(\sqrt{3}+\cot\alpha)+\frac{1}{2}(\sin\alpha-\frac{\sqrt{3}}{2}a_{2})\\
	&~~~\cdot(2-(a_{1}+a_{2})(1+\sqrt{3}\cot\alpha)+\sqrt{3}\sin\alpha	+2\cos\alpha-\frac{1}{2}a_{2}-2a_{4})\\
	&~~~+\frac{\sqrt{3}}{2}a_{5}(1-\frac{1}{2}(a_{1}+a_{2})(1+\sqrt{3}\cot\alpha)+\frac{5\sqrt{3}}{6}\sin\alpha+\cos\alpha-a_{4}-\frac{3}{4}a_{2})\\
	&~~~+\frac{\sqrt{3}}{4}a_{5}^2.
\end{align*}

The feasible region is defined by
$0<a_{1}<\frac{2\sqrt{3}}{3}\sin\alpha$,
$a_{2}>\frac{1}{2}(1+(\sin\alpha-\frac{\sqrt{3}}{2}a_{1})(\frac{\sqrt{3}}{3}+\cot\alpha))$,
$a_{2}+\frac{\sqrt{3}}{2}a_{3}(\frac{\sqrt{3}}{3}+\cot\alpha)<1+(\sin\alpha-\frac{\sqrt{3}}{2}a_{1})(\frac{\sqrt{3}}{3}+\cot\alpha)$, $a_{3}>\frac{\sqrt{3}}{3}(1-(\frac{\sqrt{3}}{2}a_{1}+\frac{\sqrt{3}}{2}a_{2}-\sin\alpha)(\frac{\sqrt{3}}{3}+\cot\alpha))$,
$a_{4}>1-(\frac{\sqrt{3}}{2}a_{1}+\frac{\sqrt{3}}{2}a_{2}-\sin\alpha)(\frac{\sqrt{3}}{3}+\cot\alpha)-a_{3}$,
$0<a_{5}<\sin\alpha-\frac{\sqrt{3}}{2}a_{2}$ and $a_{1}\geq a_{2}\geq a_{3}\geq a_{4}\geq a_{5}$.
The
maximum of the right-hand side over the closure of the feasible region is attained at
$a_{1}=\frac{2\sin\alpha}{\sqrt{3}}+\frac{2(2\sqrt{3}-\cot\alpha-4\sin\alpha)}
{(2\sqrt{3}-\cot\alpha)(1+\sqrt{3}\cot\alpha)}$,
$a_{2}=\frac{2\sin\alpha}{2\sqrt{3}-\cot\alpha}$ and
$a_{3}=a_{4}=a_{5}=\frac{\sin\alpha(\sqrt{3}-\cot\alpha)}{2\sqrt{3}-\cot\alpha}$.
Moreover, this maximum is less than $\frac{\sqrt{3}}{4}(1+\frac{2\sqrt{3}}{3}\sin\alpha)^2$.
This leads to a contradiction.

{\textit Subcase 3.1.2.1.2}: $0<a_{4}\leq 1-(\frac{\sqrt{3}}{2}a_{1}+\frac{\sqrt{3}}{2}a_{2}-\sin\alpha)(\frac{\sqrt{3}}{3}+\cot\alpha)-a_{3}$.

We place $\triangle_{1}$, $\triangle_{2}$ and $\triangle_{3}$ as in Fig.~\ref{Fig.13}(d).
The remaining equilateral triangles are used for the covering of $Z_{5}^{\ast}$ and $Z_{7}^{\ast}$, where
 $Z_{7}^{\ast}$ has base lengths $1-\frac{\sqrt{3}}{2}a_{1}(\frac{\sqrt{3}}{3}+\cot\alpha)$ and $1-(\frac{\sqrt{3}}{2}a_{1}+\frac{\sqrt{3}}{2}a_{2}-\sin\alpha)(\frac{\sqrt{3}}{3}+\cot\alpha)$ and height $\sin\alpha-\frac{\sqrt{3}}{2}a_{2}$.
By Corollary~\ref{cor:3} and $a_{i}\leq a_{4}$ for $i \geq 5$, we deduce that if $R^{\alpha}$ cannot be covered, then
\begin{align*}
	\sum A(\triangle_{n})
	&< \frac{\sqrt{3}}{4}(a_{1}^2+a_{2}^2+a_{3}^2)+\frac{\sqrt{3}}{4}a_{2}(2+(2\sin\alpha-\sqrt{3}a_{1}-\frac{\sqrt{3}}{2}a_{2})(\frac{\sqrt{3}}{3}+\cot\alpha)\!-\!2a_{3})\\
	&~~~+\frac{\sqrt{3}}{2}a_{4}(1-(\frac{\sqrt{3}}{2}a_{1}+\frac{\sqrt{3}}{2}a_{2}-\sin\alpha)(\frac{\sqrt{3}}{3}+\cot\alpha)-a_{3})\!+\!\frac{3}{8}a_{4}a_{2}(\sqrt{3}\!+\!\cot\alpha)\\
	&~~~+\frac{1}{2}(\sin\alpha-\frac{\sqrt{3}}{2}a_{2})(2-(\sqrt{3}a_{1}+\frac{\sqrt{3}}{2}a_{2}-\sin\alpha)(\frac{\sqrt{3}}{3}+\cot\alpha))\\ &~~~+\frac{\sqrt{3}}{2}a_{4}(1\!-\!\frac{\sqrt{3}}{2}a_{1}(\frac{\sqrt{3}}{3}+\cot\alpha))+\frac{\sqrt{3}}{4}a_{4}(\sin\alpha\!-\!\frac{\sqrt{3}}{2}a_{2})(\sqrt{3}+\cot\alpha)+\frac{\sqrt{3}}{4}a_{4}^2.
\end{align*}

The feasible region is defined by
$0<a_{1}<\frac{2\sqrt{3}}{3}\sin\alpha$,
$a_{2}>\frac{1}{2}(1+(\sin\alpha-\frac{\sqrt{3}}{2}a_{1})(\frac{\sqrt{3}}{3}+\cot\alpha))$,
$a_{2}+\frac{\sqrt{3}}{2}a_{3}(\frac{\sqrt{3}}{3}+\cot\alpha)<1+(\sin\alpha-\frac{\sqrt{3}}{2}a_{1})(\frac{\sqrt{3}}{3}+\cot\alpha)$, $a_{3}>\frac{\sqrt{3}}{3}(1-(\frac{\sqrt{3}}{2}a_{1}+\frac{\sqrt{3}}{2}a_{2}-\sin\alpha)(\frac{\sqrt{3}}{3}+\cot\alpha))$, $0<a_{4}\leq 1-(\frac{\sqrt{3}}{2}a_{1}+\frac{\sqrt{3}}{2}a_{2}-\sin\alpha)(\frac{\sqrt{3}}{3}+\cot\alpha)-a_{3}$ and $a_{1}\geq a_{2}\geq a_{3}\geq a_{4}$.
The
maximum of the right-hand side over the closure of the feasible region is attained at
$a_{1}=a_{2}=\tfrac{2(\sqrt{3}+\sin\alpha+\sqrt{3}\cos\alpha)}{5\sqrt{3}+3\cot\alpha}$, $a_{3}=\frac{(\sqrt{3}-\cot\alpha)(\sqrt{3}+\sin\alpha+\sqrt{3}\cos\alpha)}
{5\sqrt{3}+3\cot\alpha}$ and $a_{4}=\frac{(3-\sqrt{3})(3-\sqrt{3}\cot\alpha)(\sqrt{3}+\sin\alpha+\sqrt{3}\cos\alpha)}
{3(5\sqrt{3}+3\cot\alpha)}$.
Moreover, this maximum is less than
$\frac{\sqrt{3}}{4}(1+\frac{2\sqrt{3}}{3}\sin\alpha)^2$.
This leads to a contradiction.

{\textit Subcase 3.1.2.2}: $0<a_{3}\leq \frac{\sqrt{3}}{3}(1-(\frac{\sqrt{3}}{2}a_{1}+\frac{\sqrt{3}}{2}a_{2}-\sin\alpha)(\frac{\sqrt{3}}{3}+\cot\alpha))$.
\begin{figure}[htb]
	\centering
	\includegraphics[width=11cm]{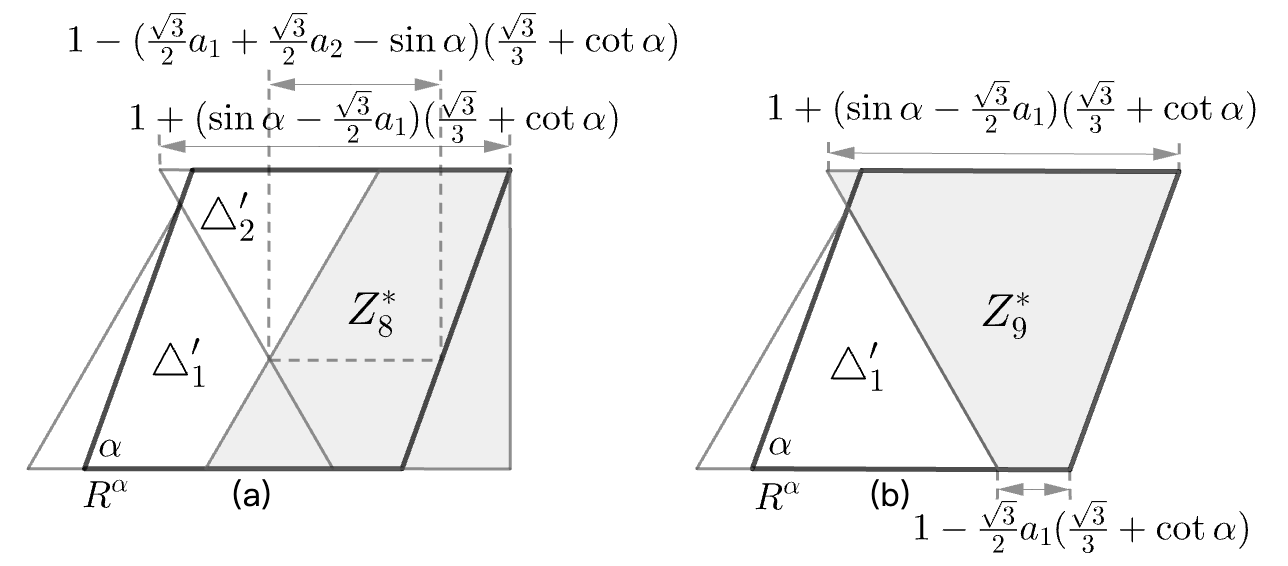}
	\caption{$0< a_{1}<\frac{2\sqrt{3}}{3}\sin\alpha$}
	\label{Fig.14}
\end{figure}

We place $\triangle_{1}$ and $\triangle_{2}$ as in Fig.~\ref{Fig.14}(a).
The remaining equilateral triangles are used for the covering of the trapezoid
$Z_{8}^{\ast}\supset R^{\alpha}\setminus (\triangle'_{1}\cup\triangle'_{2})$ with base lengths $1+(\sin\alpha-\frac{\sqrt{3}}{2}a_{1})(\frac{\sqrt{3}}{3}+\cot\alpha)-a_{2}$ and $1+(\sin\alpha-\frac{\sqrt{3}}{2}a_{1})(\frac{\sqrt{3}}{3}+\cot\alpha)-a_{2}+\frac{\sqrt{3}}{3}\sin\alpha$ and height $\sin\alpha$.
By Corollary~\ref{cor:5}, we deduce that if $R^{\alpha}$ cannot be covered, then
\begin{align*}
	\sum A(\triangle_{n})
	&< \frac{\sqrt{3}}{4}(a_{1}^2+a_{2}^2)+\frac{1}{2}\sin\alpha(2+2(\sin\alpha-\frac{\sqrt{3}}{2}a_{1})(\frac{\sqrt{3}}{3}+\cot\alpha)-2a_{2}+\frac{\sqrt{3}}{3}\sin\alpha)\\
	&~~~+\frac{\sqrt{3}}{2}a_{3}(1+(\sin\alpha-\frac{\sqrt{3}}{2}a_{1})(\frac{\sqrt{3}}{3}+\cot\alpha)-a_{2}+\frac{\sqrt{3}}{2}\sin\alpha).
\end{align*}

The feasible region is defined by
$0< a_{1}<\frac{2\sqrt{3}}{3}\sin\alpha$,
$a_{2}>\frac{1}{2}(1+(\sin\alpha-\frac{\sqrt{3}}{2}a_{1})(\frac{\sqrt{3}}{3}+\cot\alpha))$,
$a_{2}+\frac{\sqrt{3}}{2}a_{3}(\frac{\sqrt{3}}{3}+\cot\alpha)<1+(\sin\alpha-\frac{\sqrt{3}}{2}a_{1})(\frac{\sqrt{3}}{3}+\cot\alpha)$, $0<a_{3}\leq \frac{\sqrt{3}}{3}(1-(\frac{\sqrt{3}}{2}a_{1}+\frac{\sqrt{3}}{2}a_{2}-\sin\alpha)(\frac{\sqrt{3}}{3}+\cot\alpha))$ and $a_{1}\geq a_{2}\geq a_{3}$.
The
maximum of the right-hand side over the closure of the feasible region is attained at
$a_{1}=a_{2}=\tfrac{2(\sqrt{3}+\sin\alpha+\sqrt{3}\cos\alpha)}{5\sqrt{3}+3\cot\alpha}$ and $a_{3}=\frac{(\sqrt{3}-\cot\alpha)(\sqrt{3}+\sin\alpha+\sqrt{3}\cos\alpha)}{5\sqrt{3}+3\cot\alpha}$.
Moreover, this maximum is less than $\frac{\sqrt{3}}{4}(1+\frac{2\sqrt{3}}{3}\sin\alpha)^2$.
This leads to a contradiction.

{\textit Subcase 3.2}: $0<a_{2}\leq \frac{1}{2}(1+(\sin\alpha-\frac{\sqrt{3}}{2}a_{1})(\frac{\sqrt{3}}{3}+\cot\alpha))$.

We place $\triangle_{1}$ as in Fig.~\ref{Fig.14}(b).
The remaining equilateral triangles are used for the covering of the trapezoid $Z_{9}^{\ast}\supset R^{\alpha}\setminus \triangle'_{1}$ with base lengths $ 1-\frac{\sqrt{3}}{2}a_{1}(\frac{\sqrt{3}}{3}+\cot\alpha)$ and $1+(\sin\alpha-\frac{\sqrt{3}}{2}a_{1})(\frac{\sqrt{3}}{3}+\cot\alpha)$ and height $\sin\alpha$.
By Lemma~\ref{lem:2}, we deduce that if $R^{\alpha}$ cannot be covered, then
\begin{align*}
	\sum A(\triangle_{n})
	&< \frac{\sqrt{3}}{4} a_{1}^{2}+\frac{1}{2}\sin\alpha(2+(\sin\alpha-\sqrt{3}a_{1})(\frac{\sqrt{3}}{3}+\cot\alpha))\\
	&~~~+\frac{\sqrt{3}}{2}a_{2}(1-\frac{\sqrt{3}}{2}a_{1}(\frac{\sqrt{3}}{3}+\cot\alpha))+\frac{\sqrt{3}}{4}a_{2}\sin\alpha(\sqrt{3}+\cot\alpha).
\end{align*}

The feasible region is defined by
$0< a_{1}<\frac{2\sqrt{3}}{3}\sin\alpha$,
$0<a_{2}\leq\frac{1}{2} (1+(\sin\alpha-\frac{\sqrt{3}}{2}a_{1})(\frac{\sqrt{3}}{3}+\cot\alpha))$ and
$a_{1}\geq a_{2}$.
The
maximum of the right-hand side is attained at
$a_{1}=a_{2}=\tfrac{2(\sqrt{3}+\sin\alpha+\sqrt{3}\cos\alpha)}{5\sqrt{3}+3\cot\alpha}$.
Moreover, this maximum is less than $\frac{\sqrt{3}}{4}(1+\frac{2\sqrt{3}}{3}\sin\alpha)^2$.
This leads to a contradiction.

By the discussions above we know that
$\varrho({R}^{\alpha}, \triangle)\leq \frac{\sqrt{3}(\sqrt{3}+2\sin\alpha)^{2}}{12\sin\alpha}$.
However, any equilateral triangle with side length less than
$1+\frac{2\sqrt{3}}{3}\sin\alpha$ cannot parallel cover ${R}^{\alpha}$, hence
$\varrho({R}^{\alpha}, \triangle)\geq \frac{\sqrt{3}(\sqrt{3}+2\sin\alpha)^{2}}{12\sin\alpha}$.
As a consequence, $\varrho({R}^{\alpha}, \triangle)=\frac{\sqrt{3}(\sqrt{3}+2\sin\alpha)^{2}}{12\sin\alpha}$.
\end{proof}

The angle $\frac{5\pi}{12}$ comes from Subcase~2.1(ii) of Theorem~\ref{thm:2}.
For the corresponding upper-bound expression and feasible region,
the maximum is attained at
$a_1=\frac{2\sqrt3}{3}\sin\alpha$, $a_2=a_3=\frac{3-\sqrt6}{2}$,
when $\alpha\in(\frac{\pi}{3},\frac{5\pi}{12}]$,
whereas it is attained at
$a_1=\frac{2\sqrt3}{3}\sin\alpha$,
$a_2=a_3=\frac32(1-\frac{\sqrt3}{3}\sin\alpha-\cos\alpha)$,
when
$\alpha\in(\frac{5\pi}{12},\frac{\pi}{2}]$.
At $\alpha=\frac{5\pi}{12}$, we have
$\frac{3}{2}(1-\frac{\sqrt3}{3}\sin\alpha-\cos\alpha)=\frac{3-\sqrt6}{2}$.
Hence this is the transition value at which
the relevant active boundary constraint changes.

\begin{thm}\label{thm:3}
 If $\frac{5\pi}{12}< \alpha \leq \frac{\pi}{2}$ and
 $\sum A(\triangle_{n}) \geq \frac{\sqrt{3}}{4}(1+\frac{2\sqrt{3}}{3}\sin\alpha)^{2}$,
 then $\{\triangle_{n}\}$ permits a parallel covering of $R^{\alpha}$ and thus
 $\varrho({R}^{\alpha}, \triangle)=\frac{\sqrt{3}(\sqrt{3}+2\sin\alpha)^{2}}{12\sin\alpha}$.
 \end{thm}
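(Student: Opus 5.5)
The plan is to repeat the proof of Theorem~\ref{thm:2} almost verbatim, with one change: the single subcase whose extremal point depends on where $\alpha$ lies relative to $\frac{5\pi}{12}$ gets a modified maximisation. As before, we may assume $a_{1}<1+\frac{2\sqrt{3}}{3}\sin\alpha$; otherwise $\triangle_{1}$ alone parallel covers $R^{\alpha}$. We then split by the size of $a_{1}$ into the same three cases:
\begin{itemize}
\item Case 1: $1+\frac{\sqrt{3}}{3}\sin\alpha-\cos\alpha\le a_{1}<1+\frac{2\sqrt{3}}{3}\sin\alpha$;
\item Case 2: $\frac{2\sqrt{3}}{3}\sin\alpha\le a_{1}<1+\frac{\sqrt{3}}{3}\sin\alpha-\cos\alpha$;
\item Case 3: $0<a_{1}<\frac{2\sqrt{3}}{3}\sin\alpha$.
\end{itemize}
In each subcase we use the same placements of $\triangle_{1},\dots,\triangle_{5}$ (Figs.~\ref{Fig.6}--\ref{Fig.14}). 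The residual region is covered by Lemma~\ref{lem:1}, Lemma~\ref{lem:2}, or Corollaries~\ref{cor:3}--\ref{cor:6}. In each subcase this yields an upper bound for $\sum A(\triangle_{n})$ that is quadratic in $a_{1},a_{2},\dots$ whenever $R^{\alpha}$ is not covered. None of these geometric constructions used $\alpha\le\frac{5\pi}{12}$; they only need $\alpha>\frac{\pi}{3}$, so that $\frac{\sqrt{3}}{3}-\cot\alpha>0$ and the second branch of Lemma~\ref{lem:2} applies. Hence the covering arguments transfer unchanged. What remains is to recheck, for each fixed $\alpha\in(\frac{5\pi}{12},\frac{\pi}{2}]$, that the maximum of each quadratic over the closure of its feasible region is below $\frac{\sqrt{3}}{4}(1+\frac{2\sqrt{3}}{3}\sin\alpha)^2$. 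For Case 1 this maximum equals that value and is attained only on the excluded boundary $a_{1}=1+\frac{2\sqrt{3}}{3}\sin\alpha$, so the strict inequality from the corollaries still gives the contradiction.

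For every subcase other than Subcase~2.1(ii), the maximiser should be the same point as in Theorem~\ref{thm:2}, with $\alpha$ now ranging over the new interval. I would verify that this point remains feasible: for instance, $a_{4}=1-\frac{\sqrt{3}}{3}\sin\alpha-\cos\alpha$ stays positive, and the orderings $a_{1}\ge a_{2}\ge\cdots$ persist. I would then confirm the resulting one-variable trigonometric inequality in $\alpha$ on $(\frac{5\pi}{12},\frac{\pi}{2}]$, for example by checking endpoints and monotonicity, or by substituting $t=\tan(\alpha/2)$ and clearing denominators.

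The main obstacle is Subcase~2.1(ii), where the lower constraint $a_{3}\ge\frac{3-\sqrt{6}}{2}(1+\frac{2\sqrt{3}}{3}\sin\alpha-a_{1})$ is no longer the active one. By the remark after Theorem~\ref{thm:2}, for $\alpha>\frac{5\pi}{12}$ the active constraint becomes $a_{2}>\frac{3}{2}(1+\frac{\sqrt{3}}{3}\sin\alpha-a_{1}-\cos\alpha)$ together with $a_{2}\ge a_{3}$. So I expect the maximum to be attained at
\[
a_{1}=\tfrac{2\sqrt{3}}{3}\sin\alpha,\qquad a_{2}=a_{3}=\tfrac{3}{2}\Bigl(1-\tfrac{\sqrt{3}}{3}\sin\alpha-\cos\alpha\Bigr).
\]
I would first rule out interior stationary points; the Hessian in $a_{3}$ has coefficient $\frac{\sqrt{3}}{4}-\frac{\sqrt{3}}{2}<0$, so only boundary candidates matter. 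Next I would compare the values on the faces $a_{2}=a_{3}$ and $a_{2}=\frac{3}{2}(\cdots)$. Finally I would evaluate the bound at the point above and show that it is less than $\frac{\sqrt{3}}{4}(1+\frac{2\sqrt{3}}{3}\sin\alpha)^2$ for all $\alpha\in(\frac{5\pi}{12},\frac{\pi}{2}]$. If needed, Subcase~2.1(iii) would be adjusted the same way, replacing its threshold $\frac{3-\sqrt{6}}{2}(\cdot)$ consistently. The theorem then follows with the same lower bound: no single triangle of side less than $1+\frac{2\sqrt{3}}{3}\sin\alpha$ parallel covers $R^{\alpha}$, so $\varrho(R^{\alpha},\triangle)=\frac{\sqrt{3}(\sqrt{3}+2\sin\alpha)^2}{12\sin\alpha}$.
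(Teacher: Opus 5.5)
Your route differs from the paper's in Case~2 only. The paper treats Cases~1 and~3 as you do. For $\frac{5\pi}{12}<\alpha\le\frac{\pi}{2}$, however, it abandons the Case~2 decomposition of Theorem~\ref{thm:2} and builds a new one. It splits according to whether $a_2$ exceeds $1+\frac{2\sqrt3}{3}\sin\alpha-a_1$ (the part of the top side left uncovered by $\triangle_1$) or $\frac{3-\sqrt3}{2}$ times this quantity. It uses new placements (Figs.~\ref{Fig.15}--\ref{Fig.17}) with a $\frac45$-threshold on $a_3$, and closes with Corollaries~\ref{cor:5}, \ref{cor:6} and Lemma~\ref{lem:2}.

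Keeping the old Case~2 is legitimate. This is consistent with the paper's remark after Theorem~\ref{thm:2}: $\frac{5\pi}{12}$ enters only through where the maximum in Subcase~2.1(ii) sits. By my own computation (not something the paper states), your re-maximisation does close. Put $t:=\frac32(1-\frac{\sqrt3}{3}\sin\alpha-\cos\alpha)$. At $a_1=\frac{2\sqrt3}{3}\sin\alpha$, $a_2=a_3=t$, the Subcase~2.1(ii) bound equals $\frac{\sqrt3}{4}\bigl(2\sin^2\alpha+\frac{4}{\sqrt3}\sin\alpha+\frac29t^2-\frac{5}{3\sqrt3}\,t\sin\alpha+\frac43t\bigr)$. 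Its gap to the target shrinks as $\alpha\to\frac{\pi}{2}$. There the bound is $\frac{11\sqrt3-3}{8}$ against $\frac{7\sqrt3+12}{12}$, a margin of only $\frac{33-19\sqrt3}{24}\approx 0.0038$ (equivalently $1089>1083$). So this step must be carried out in exact arithmetic, not by sampling.

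Two points of your plan need repair.
\begin{itemize}
\item Concavity in $a_3$ (coefficient $-\frac{\sqrt3}{4}$) is what \emph{permits} an interior maximum in $a_3$. Interior maxima are ruled out instead because the coefficient of $a_1^2$ is $\frac{7\sqrt3}{8}>0$.
\item Subcase~2.1(iii) does not keep its old maximiser. The point $a_2=a_3=\frac{3-\sqrt6}{2}$ violates $a_2>\frac32(1+\frac{\sqrt3}{3}\sin\alpha-a_1-\cos\alpha)$ once $\alpha>\frac{5\pi}{12}$. That subcase must therefore be re-maximised; the margin there is comfortable, and the threshold $\frac{3-\sqrt6}{2}(\cdot)$ need not change.
\end{itemize}

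As to what each route buys: yours needs no new constructions. The paper's new Case~2 gains no slack at $\alpha=\frac{\pi}{2}$. Its Subcase~2.2.1 is your Subcase~2.1(ii) with the roles of $\triangle_2$ and $\triangle_3$ interchanged and reaches the same value $\frac{11\sqrt3-3}{8}$. Its Subcase~2.1.2, with value $\frac{\sqrt3}{4}+\frac15+\frac{119\sqrt3}{150}$, is tighter still.
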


\begin{proof}
We claim that $\{\triangle_{n}\}$ permits a parallel covering of $R^{\alpha}$.
We may assume that $a_{1} < 1 + \frac{2\sqrt{3}}{3}\sin\alpha$, otherwise $R^{\alpha}$ can be parallel covered by $\triangle_{1}$.

{\textit Case 1}: $1 + \frac{\sqrt{3}}{3}\sin\alpha-\cos\alpha\leq a_{1}<1 + \frac{2\sqrt{3}}{3}\sin\alpha$.

The argument in this case is the same as that in Case~1 of Theorem~\ref{thm:2}.

{\textit Case 2}: $\frac{2\sqrt{3}}{3}\sin\alpha \leq a_{1}<1 + \frac{\sqrt{3}}{3}\sin\alpha-\cos\alpha$.
\begin{figure}[htb]
	\centering
	\includegraphics[width=12.3cm]{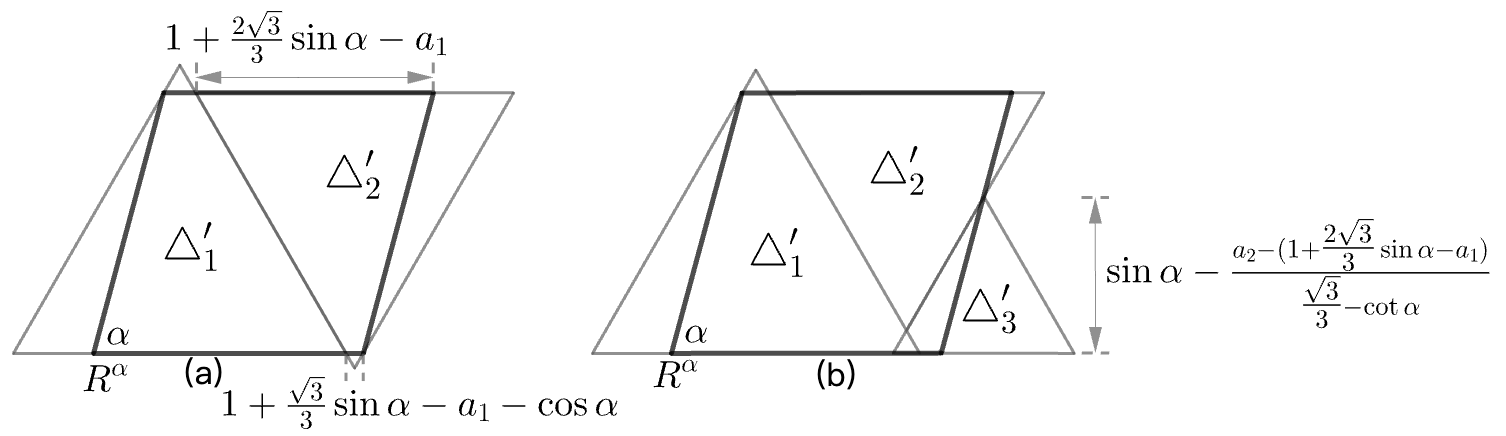}
	\caption{$a_{2}>1+\frac{2\sqrt{3}}{3}\sin\alpha-a_{1}$}
	\label{Fig.15}
\end{figure}

We assume that $a_{2}<1+\sqrt{3}\sin\alpha-a_{1}-\cos\alpha$, otherwise $R^{\alpha}$ can be parallel covered by $\triangle_{1}$ and $\triangle_{2}$, as shown in Fig.~\ref{Fig.15}(a).

{\textit Subcase 2.1}: $1+\frac{2\sqrt{3}}{3}\sin\alpha-a_{1}<a_{2}<1+\sqrt{3}\sin\alpha-a_{1}-\cos\alpha$.

We assume that $\frac{\sqrt{3}}{2}a_{3}<\sin\alpha-\frac{a_{2}-(1+\frac{2\sqrt{3}}{3}\sin\alpha-a_{1})}{\frac{\sqrt{3}}{3}-\cot\alpha}$, otherwise $R^{\alpha}$ can be parallel covered by $\triangle_{1}$, $\triangle_{2}$ and $\triangle_{3}$, as shown in Fig.~\ref{Fig.15}(b).

{\textit Subcase 2.1.1}: $\frac{4}{5}\cdot(\sin\alpha-\frac{a_{2}-(1+\frac{2\sqrt{3}}{3}\sin\alpha-a_{1})}{\frac{\sqrt{3}}{3}-\cot\alpha})\leq\frac{\sqrt{3}}{2}a_{3}<\sin\alpha-\frac{a_{2}-(1+\frac{2\sqrt{3}}{3}\sin\alpha-a_{1})}{\frac{\sqrt{3}}{3}-\cot\alpha}$.
\begin{figure}[htb]
	\centering
	\includegraphics[width=13.3cm]{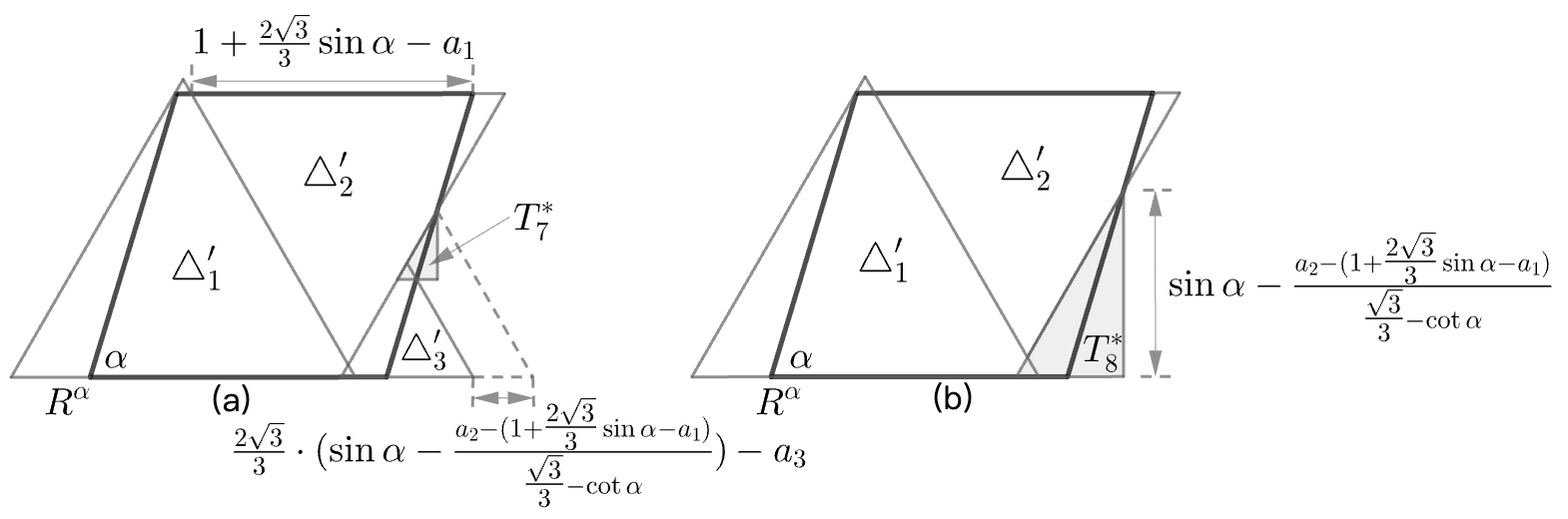}
	\caption{$0<\frac{\sqrt{3}}{2}a_{3}<\sin\alpha-\frac{a_{2}-(1+\frac{2\sqrt{3}}{3}\sin\alpha-a_{1})}{\frac{\sqrt{3}}{3}-\cot\alpha}$}
	\label{Fig.16}
\end{figure}

We place $\triangle_{1}$, $\triangle_{2}$ and $\triangle_{3}$ as in Fig.~\ref{Fig.16}(a).
The remaining equilateral triangles are used for the covering of the right triangle $T_{7}^{\ast}$ with leg lengths
$\frac{\sqrt{3}}{3}\cdot\frac{\frac{2\sqrt{3}}{3}\cdot(\sin\alpha-\frac{a_{2}-(1+\frac{2\sqrt{3}}{3}\sin\alpha-a_{1})}{\frac{\sqrt{3}}{3}-\cot\alpha})-a_{3}}{\frac{\sqrt{3}}{3}+\cot\alpha}$ and $\frac{\frac{2\sqrt{3}}{3}\cdot(\sin\alpha-\frac{a_{2}-(1+\frac{2\sqrt{3}}{3}\sin\alpha-a_{1})}{\frac{\sqrt{3}}{3}-\cot\alpha})-a_{3}}{\frac{\sqrt{3}}{3}+\cot\alpha}$.
By Corollary~\ref{cor:6} and $a_{4}\leq a_{3}$, we deduce that if $R^{\alpha}$ cannot be covered, then
\begin{align*}
	\sum A(\triangle_{n})
	&< \frac{\sqrt{3}}{4}(a_{1}^2+a_{2}^2+a_{3}^2)
	 +\frac{\sqrt{3}}{6}\cdot\left(\frac{\frac{2\sqrt{3}}{3}\cdot(\sin\alpha-\frac{a_{2}-(1+\frac{2\sqrt{3}}{3}\sin\alpha-a_{1})}{\frac{\sqrt{3}}{3}-\cot\alpha})-a_{3}}{\frac{\sqrt{3}}{3}+\cot\alpha}\right)^2\\
	&~~~+\frac{3}{4}a_{3}\cdot\frac{\frac{2\sqrt{3}}{3}\cdot(\sin\alpha-\frac{a_{2}-(1+\frac{2\sqrt{3}}{3}\sin\alpha-a_{1})}{\frac{\sqrt{3}}{3}-\cot\alpha})-a_{3}}{\frac{\sqrt{3}}{3}+\cot\alpha}.
\end{align*}

The maximum of the right-hand side over the closure of the feasible region is
$\frac{\sqrt{3}}{4}+\frac{1}{5}\sin\alpha+\frac{11\sqrt{3}}{25}\sin^2\alpha+\frac{2\sin^2\alpha(18\cot\alpha+7\sqrt{3})}{225(\frac{\sqrt{3}}{3}+\cot\alpha)^2}$ provided that the feasible region is given by $\frac{2\sqrt{3}}{3}\sin\alpha \leq a_{1}<1 + \frac{\sqrt{3}}{3}\sin\alpha-\cos\alpha$, $1+\frac{2\sqrt{3}}{3}\sin\alpha-a_{1}<a_{2}<1+\sqrt{3}\sin\alpha-a_{1}-\cos\alpha$, $\frac{4}{5}\cdot(\sin\alpha-\frac{a_{2}-(1+\frac{2\sqrt{3}}{3}\sin\alpha-a_{1})}{\frac{\sqrt{3}}{3}-\cot\alpha})\leq\frac{\sqrt{3}}{2}a_{3}<\sin\alpha-\frac{a_{2}-(1+\frac{2\sqrt{3}}{3}\sin\alpha-a_{1})}{\frac{\sqrt{3}}{3}-\cot\alpha}$ and $a_{1}\geq a_{2}\geq a_{3}$.
This maximum, which is attained at
$a_{1}=1+\frac{2\sqrt{3}}{15}\sin\alpha$ and $a_{2}=a_{3}=\frac{8\sqrt{3}}{15}\sin\alpha$, is less than $\frac{\sqrt{3}}{4}(1+\frac{2\sqrt{3}}{3}\sin\alpha)^2$.
This leads to a contradiction.

{\textit Subcase 2.1.2}: $0<\frac{\sqrt{3}}{2}a_{3}<\frac{4}{5}\cdot(\sin\alpha-\frac{a_{2}-(1+\frac{2\sqrt{3}}{3}\sin\alpha-a_{1})}{\frac{\sqrt{3}}{3}-\cot\alpha})$.

We place $\triangle_{1}$ and $\triangle_{2}$ as in Fig.~\ref{Fig.16}(b).
The remaining equilateral triangles are used for the covering of the right triangle $T_{8}^{\ast}\supset R^{\alpha}\setminus (\triangle'_{1}\cup\triangle'_{2})$ with leg lengths
$\frac{\sqrt{3}}{3}\cdot(\sin\alpha-\frac{a_{2}-(1+\frac{2\sqrt{3}}{3}\sin\alpha-a_{1})}{\frac{\sqrt{3}}{3}-\cot\alpha})$ and
$\sin\alpha-\frac{a_{2}-(1+\frac{2\sqrt{3}}{3}\sin\alpha-a_{1})}{\frac{\sqrt{3}}{3}-\cot\alpha}$.
By Corollary~\ref{cor:6}, we deduce that if $R^{\alpha}$ cannot be covered, then
\begin{align*}
	\sum A(\triangle_{n})
	&< \frac{\sqrt{3}}{4}(a_{1}^2+a_{2}^2)
	+\frac{\sqrt{3}}{6}\cdot\left(\sin\alpha-\frac{a_{2}-(1+\frac{2\sqrt{3}}{3}\sin\alpha-a_{1})}{\frac{\sqrt{3}}{3}-\cot\alpha}\right)^2\\
	&~~~+\frac{3}{4}a_{3}\cdot\left(\sin\alpha-\frac{a_{2}-(1+\frac{2\sqrt{3}}{3}\sin\alpha-a_{1})}{\frac{\sqrt{3}}{3}-\cot\alpha}\right).
\end{align*}

The maximum of the right-hand side over the closure of the feasible region is $\frac{\sqrt3}{4}+\frac15\sin\alpha+\frac{119\sqrt3}{150}\sin^2\alpha$ provided that the feasible region is given by $\frac{2\sqrt{3}}{3}\sin\alpha \leq a_{1}<1 + \frac{\sqrt{3}}{3}\sin\alpha-\cos\alpha$, $1+\frac{2\sqrt{3}}{3}\sin\alpha-a_{1}<a_{2}<1+\sqrt{3}\sin\alpha-a_{1}-\cos\alpha$, $0<\frac{\sqrt{3}}{2}a_{3}
<\frac{4}{5}\cdot(\sin\alpha-\frac{a_{2}-(1+\frac{2\sqrt{3}}{3}\sin\alpha-a_{1})}{\frac{\sqrt{3}}{3}-\cot\alpha})$ and
$a_{1}\geq a_{2}\geq a_{3}$.
This maximum, which is attained at
$a_{1}=1+\frac{2\sqrt{3}}{15}\sin\alpha$ and $a_{2}=a_{3}=\frac{8\sqrt{3}}{15}\sin\alpha$, is less than $\frac{\sqrt{3}}{4}(1+\frac{2\sqrt{3}}{3}\sin\alpha)^2$.
This leads to a contradiction.

{\textit Subcase 2.2}: $\frac{3-\sqrt{3}}{2}\cdot(1+\frac{2\sqrt{3}}{3}\sin\alpha-a_{1})<a_{2}\leq 1+\frac{2\sqrt{3}}{3}\sin\alpha-a_{1}$.

{\textit Subcase 2.2.1}: $a_{3}> 1+\frac{\sqrt{3}}{3}\sin\alpha-a_{1}-\cos\alpha$.
\begin{figure}[htb]
	\centering
\includegraphics[width=15cm]{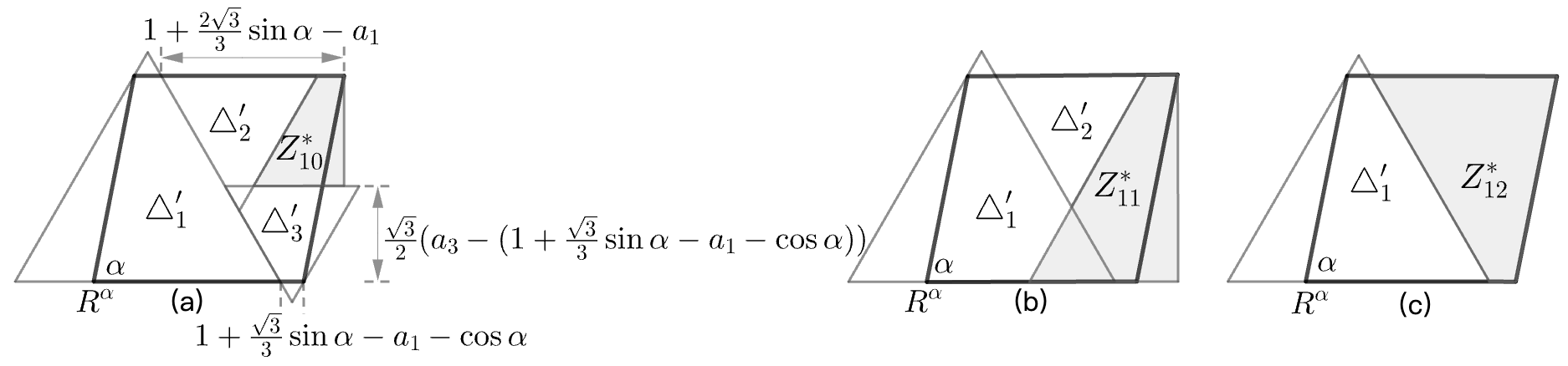}
	\caption{$0<a_{2}\leq 1+\frac{2\sqrt{3}}{3}\sin\alpha-a_{1}$}
	\label{Fig.17}
\end{figure}

We place $\triangle_{1}$, $\triangle_{2}$ and $\triangle_{3}$ as in Fig.~\ref{Fig.17}(a).
The remaining equilateral triangles are used for the covering of the trapezoid $Z^{\ast}_{10}\supset R^{\alpha}\setminus (\triangle'_{1}\cup\triangle'_{2}\cup\triangle'_{3})$ with base lengths $1+\frac{2\sqrt{3}}{3}\sin\alpha-a_{1}-a_{2}$ and
$\frac{3}{2}-\frac{3}{2}a_{1}-a_{2}-\frac{1}{2}a_{3}-\frac{1}{2}\cos\alpha+\frac{7\sqrt{3}}{6}\sin\alpha$ and height $\frac{\sqrt{3}}{2}(1+\sqrt{3}\sin\alpha-a_{1}-\cos\alpha-a_{3})$.
By Corollary~\ref{cor:5} and $a_{4}\leq a_{3}$, we deduce that if $R^{\alpha}$ cannot be covered, then
\begin{align*}
	\sum A(\triangle_{n})
	&<\frac{\sqrt{3}}{4}(a_{1}^2+a_{2}^2+a_{3}^2)+\frac{\sqrt{3}}{4}(1+\sqrt{3}\sin\alpha-a_{1}-\cos\alpha-a_{3})\\
	&~~~\cdot(\frac{5}{2}-\frac{5}{2}a_{1}-2a_{2}-\frac{1}{2}a_{3}-\frac{1}{2}\cos\alpha+\frac{11\sqrt{3}}{6}\sin\alpha)\\
	&~~~+\frac{\sqrt{3}}{8}a_{3}(7-7a_{1}-4a_{2}-3a_{3}-3\cos\alpha+\frac{17\sqrt{3}}{3}\sin\alpha).
\end{align*}

The maximum of the right-hand side over the closure of the feasible region is
$\frac{1}{16}\cdot(4\sqrt{3}\sin^{2}\alpha-4\sin\alpha\cos\alpha+3(1+\sqrt{3})\sin\alpha-3(3+\sqrt{3})\cos\alpha+15\sqrt{3}-9)$ provided that the feasible region is given by $\frac{2\sqrt{3}}{3}\sin\alpha \leq a_{1}<1 + \frac{\sqrt{3}}{3}\sin\alpha-\cos\alpha$, $\frac{3-\sqrt{3}}{2}\cdot(1+\frac{2\sqrt{3}}{3}\sin\alpha-a_{1})<a_{2}\leq 1+\frac{2\sqrt{3}}{3}\sin\alpha-a_{1}$, $a_{3}> 1+\frac{\sqrt{3}}{3}\sin\alpha-a_{1}-\cos\alpha$ and $a_{1}\geq a_{2} \geq a_{3}$.
This maximum, which is attained at
$a_{1}=\frac{2\sqrt{3}}{3}\sin\alpha$ and $a_{2}=a_{3}=\frac{3-\sqrt{3}}{2}$, is less than $\frac{\sqrt{3}}{4}(1+\frac{2\sqrt{3}}{3}\sin\alpha)^2$.
This leads to a contradiction.

{\textit Subcase 2.2.2}: $0<a_{3}\leq 1+\frac{\sqrt{3}}{3}\sin\alpha-a_{1}-\cos\alpha$.

We place $\triangle_{1}$ and $\triangle_{2}$ as in Fig.~\ref{Fig.17}(b).
The remaining equilateral triangles are used for the covering of the trapezoid $Z^{\ast}_{11}\supset R^{\alpha}\setminus (\triangle'_{1}\cup\triangle'_{2})$ with base lengths $1+\frac{2\sqrt{3}}{3}\sin\alpha-a_{1}-a_{2}$ and $1+\sqrt{3}\sin\alpha-a_{1}-a_{2}$ and height $\sin\alpha$.
By Corollary~\ref{cor:5}, we deduce that if $R^{\alpha}$ cannot be covered, then
\begin{align*}
	\sum A(\triangle_{n})
	&< \frac{\sqrt{3}}{4}(a_{1}^{2}+a^{2}_{2})+\frac{1}{2}\sin\alpha(2+\frac{5\sqrt{3}}{3}\sin\alpha-2a_{1}-2a_{2})\\
	&~~~+\frac{\sqrt{3}}{2}a_{3}(1+\frac{7\sqrt{3}}{6}\sin\alpha-a_{1}-a_{2}).
\end{align*}

The maximum of the right-hand side over the closure of the feasible region is
$\frac{\sqrt{3}}{4}\sin^{2}\alpha
+\frac{2+\sqrt{3}}{4}\sin\alpha
+\frac{\sqrt{3}}{2}-\frac{3}{4}\sin\alpha\cos\alpha
-\frac{3-\sqrt{3}}{4}\cos\alpha
-\frac{3}{8}$ provided that the feasible region is given by $\frac{2\sqrt{3}}{3}\sin\alpha \leq a_{1}<1 + \frac{\sqrt{3}}{3}\sin\alpha-\cos\alpha$, $\frac{3-\sqrt{3}}{2}\cdot(1+\frac{2\sqrt{3}}{3}\sin\alpha-a_{1})<a_{2}\leq 1+\frac{2\sqrt{3}}{3}\sin\alpha-a_{1}$, $0<a_{3}\leq 1+\frac{\sqrt{3}}{3}\sin\alpha-a_{1}-\cos\alpha$ and $a_{1}\geq a_{2} \geq a_{3}$.
This maximum, which is attained at
$a_{1}=\frac{2\sqrt{3}}{3}\sin\alpha$, $a_{2}=\frac{3-\sqrt{3}}{2}$ and $a_{3}=1-\frac{\sqrt{3}}{3}\sin\alpha-\cos\alpha$, is less than
$\frac{\sqrt{3}}{4}(1+\frac{2\sqrt{3}}{3}\sin\alpha)^2$.
This leads to a contradiction.

{\textit Subcase 2.3}: $0<a_{2}\leq \frac{3-\sqrt{3}}{2}\cdot(1+\frac{2\sqrt{3}}{3}\sin\alpha-a_{1})$.

We place $\triangle_{1}$ as in Fig.~\ref{Fig.17}(c).
The remaining equilateral triangles are used for the covering of the trapezoid
$Z^{\ast}_{12}=R^{\alpha}\setminus \triangle'_{1}$ with base lengths
$1+\frac{\sqrt{3}}{3}\sin\alpha-a_{1}-\cos\alpha$ and $1+\frac{2\sqrt{3}}{3}\sin\alpha-a_{1}$ and height $\sin\alpha$.
By Lemma~\ref{lem:2}, we deduce that if $R^{\alpha}$ cannot be covered, then
\begin{align*}
	\sum A(\triangle_{n})
	&< \frac{\sqrt{3}}{4}a_{1}^{2}+\frac{1}{2}\sin\alpha(2+\sqrt{3}\sin\alpha-2a_{1}-\cos\alpha)\\
	&~~~+\frac{\sqrt{3}}{2}a_{2}(1+\frac{\sqrt{3}}{3}\sin\alpha-a_{1}-\cos\alpha)+\frac{\sqrt{3}}{4}a_{2}\sin\alpha(\sqrt{3}+\cot\alpha).
\end{align*}

The maximum of the right-hand side is
$\frac{\sqrt{3}}{6}\sin^{2}\alpha-\frac{1}{2}\sin\alpha\cos\alpha+\frac{11-\sqrt{3}}{8}\sin\alpha+\frac{3(\sqrt{3}-1)}{8}(2-\cos\alpha)$ provided that the feasible region is given by $\frac{2\sqrt{3}}{3}\sin\alpha \leq a_{1}<1 + \frac{\sqrt{3}}{3}\sin\alpha-\cos\alpha$, $0<a_{2}\leq\frac{3-\sqrt{3}}{2}\cdot(1+\frac{2\sqrt{3}}{3}\sin\alpha-a_{1})$ and $a_{1}\geq a_{2}$.
This maximum, which is attained at $a_{1}=\frac{2\sqrt{3}}{3}\sin\alpha$ and $a_{2}=\frac{3-\sqrt{3}}{2}$, is less than $\frac{\sqrt{3}}{4}(1+\frac{2\sqrt{3}}{3}\sin\alpha)^2$.
This leads to a contradiction.

{\textit Case 3}: $0<a_{1}<\frac{2\sqrt{3}}{3}\sin\alpha$.

The covering construction is the same as that in Case~3 of Theorem~\ref{thm:2}, and the same right-hand side expressions are obtained in the corresponding subcases.
For each subcase, we recheck the maximum of the corresponding expression over the closure of its admissible domain for the present range of $\alpha$ and verify that it is still bounded above by $\frac{\sqrt{3}}{4}(1+\frac{2\sqrt{3}}{3}\sin\alpha)^2$.
Thus the same contradictions follow in Case~3.

By the discussions above we know that $\varrho({R}^{\alpha}, \triangle)\leq \frac{\sqrt{3}(\sqrt{3}+2\sin\alpha)^{2}}{12\sin\alpha}$.
However, any equilateral triangle with side length less than $1+\frac{2\sqrt{3}}{3}\sin\alpha$ cannot parallel cover ${R}^{\alpha}$, hence $\varrho({R}^{\alpha}, \triangle)\geq \frac{\sqrt{3}(\sqrt{3}+2\sin\alpha)^{2}}{12\sin\alpha}$.
As a consequence, $\varrho({R}^{\alpha}, \triangle)=\frac{\sqrt{3}(\sqrt{3}+2\sin\alpha)^{2}}{12\sin\alpha}$.
\end{proof}

\vspace{0.8cm}
\noindent{\bf Funding}. This research was supported by the National Natural Science
Foundation of China (Grant Number 12371326).

\end{document}